\numberwithin{equation}{section}
\newtheorem{theorem}{Theorem}[section]
\newtheorem{lemma}[theorem]{Lemma}
\newtheorem{proposition}[theorem]{Proposition}
\newtheorem{corollary}[theorem]{Corollary}
\newtheorem{definition}[theorem]{Definition}
\newtheorem{example}[theorem]{Example}
\newtheorem{remark}[theorem]{Remark}
\begin{document}

\title{Matrices as graded BiHom-algebras and decompositions}

\author{Jiacheng Sun}
\address{School of Mathematics, Southeast University, Nanjing 211189, China} 
\email{220242018@seu.edu.cn}

\author{Shuanhong Wang}
\address{Shing-Tung Yau Centre, School of Mathematics, Southeast University, Nanjing 210096, China}
\email{shuanhwang@seu.edu.cn}

\author{Haoran Zhu}
\address{School of Physical and Mathematical Sciences, Nanyang Technological University, 21 Nanyang Link, 637371, Singapore}
\email{haoran.zhu@ntu.edu.sg}

\date{\today }
\subjclass[2020]{15}
\keywords{Matrix algebra, graded algebra, decomposition}

\begin{abstract}
We present matrices as graded BiHom-algebras and consider various characteristics of their decompositions. Specifically, we introduce a notion of connection in the support of the grading and use it to construct a family of canonical graded ideals. We show that, under suitable assumptions, such as \(\Sigma\)-multiplicativity, maximal length, and centre triviality, the matrix BiHom-algebra decomposes into a direct sum of graded simple ideals. We further extend our results to general graded BiHom-algebras over arbitrary base fields. As applications, we reinterpret classical gradings on matrix algebras such as those induced by Pauli matrices and the \(\mathbb{Z}_n \times \mathbb{Z}_n\)-grading in terms of our setting.

\end{abstract}

\maketitle

\section{Introduction}

Graded and decomposable structures on various algebras have been well studied in recent years~\cite{CDM, PPS}. One of the most interesting examples is the graded and decomposable matrix (algebra)~\cite{BZ, C2, D, EDN, GG1, GG2, G, K, TG}, besides being attractive in themselves, these algebras can be used to derive many examples of graded Lie algebras, which play an influential role in the theory of integrable systems~\cite{SWZ3, V}, general relativity~\cite{CS3}, and electroweak interactions~\cite{CFS}.

The present paper is devoted to the study of the structure of the graded matrix algebra \((\mathcal{M}_I(\mathcal{R}), \star)\) within a more general picture, graded using a regular BiHom group \((G, +_{(\alpha,\beta)},\alpha,\beta,\mathbf{0})\)~\cite{Z}. 

The origins of Hom-algebra structures can be traced back to the physics literature, where they arose in the context of \(q\)-deformations of Lie algebras of vector fields. The concept of Hom-Lie algebras was introduced by Hartwig, Larsson, and Silvestrov to describe the algebraic structure underlying certain \(q\)-deformations of the Witt and Virasoro algebras (see~\cite{HLS, SWZZ}). 

Subsequently, various other Hom-type structures have been developed, including Hom-coassociative coalgebras and Hom-pre-Lie algebras. As a natural extension of Hom-Lie algebras, Graziani, Makhlouf, Menini, and Panaite introduced the notion of BiHom-Lie algebras in~\cite{GMM}. These are Lie-type algebras equipped with a bracket and two commuting homomorphisms \(\alpha\) and \(\beta\). 

In the special case where \(\alpha = \beta\), a BiHom-algebra reduces to a Hom-algebra, and when \(\alpha = \beta = \operatorname{Id}\), it recovers an ordinary algebra. This framework has received increasing attention in recent years, leading to the development of related structures such as BiHom-type Lie superalgebras, BiHom-Lie colour algebras, and BiHom-associative algebras (see~\cite{HMN, AHM, GMM, SWZ1, SWZ2}).

The paper is organised as follows. In Section~\ref{sect2}, we review some basic preliminaries about graded algebras and BiHom-deformed algebras. In Section~\ref{sect3}, we introduce the notion of \emph{connection} within the support \(\Sigma\) of a grading and prove that this relation defines an equivalence relation. This framework enables us to classify the homogeneous components via equivalence classes. In Section~\ref{sect4}, we apply the connection technique to decompose the graded matrix BiHom-algebra \((\mathcal{M}_I(\mathcal{R}), \star, \psi, \phi)\) as a direct sum of graded ideals, each indexed by a distinct equivalence class in \(\Sigma\). We further show that these ideals are mutually orthogonal, and we establish sufficient conditions under which the decomposition is direct. In Section~\ref{sect5}, we extend the results to general graded BiHom-algebras over arbitrary base fields. We introduce the notions of \(\Sigma\)-multiplicativity and maximal length, and demonstrate that under these assumptions, the algebra admits a direct sum decomposition into graded simple ideals, each supported on a connected subset of \(\Sigma\).
 
Throughout the paper, \(\mathbb{N}\) will denote the set of non-negative integers and \(\mathbb{Z}\) the set of integers.

\section{Preliminary}\label{sect2}
In this section, we recall some fundamental concepts of BiHom-associative algebras~\cite{GMM} and graded algebras. Unless otherwise specified, all algebras are assumed to be defined over an arbitrary commutative ring of scalars \(\mathcal{R}\).

\begin{definition}\label{algebra_based_module}
An \(\mathcal{R}\)-module \(\mathcal{M}\) endowed with a bilinear product
\[
\star: \mathcal{M} \times \mathcal{M} \to \mathcal{M}, \quad (x, y) \mapsto x \star y
\]
is called a \textbf{BiHom-algebra} if there exists two commutative algebra automorphisms
\(
\psi:\mathcal{M} \to \mathcal{M},\phi: \mathcal{M} \to \mathcal{M}
\), i.e,
\begin{align*}
    \psi(x\star y)=\psi(x)\star\psi(y),\quad
    \phi(x\star y)=\phi(x)\star\phi(y),
\end{align*}
called the twisting maps of \(\mathcal{M}\), such that the following \textbf{BiHom-associativity} holds for all \(x, y, z \in \mathcal{M}\):
\[
\psi(x) \star (y \star z)=(x \star y) \star \phi(z).
\]
When there is no confusion, we will use the tuple $(\mathcal{M},\star,\psi,\phi)$ to describe the structure of the BiHom-algebra $M$. 
\end{definition}

Now we will introduce the subalgebra and the ideal for the BiHom-algebra $(\mathcal{M},\star,\psi,\phi)$.

\begin{definition}\label{def_subalgebra}
Let  \((\mathcal{M}, \star, \psi, \phi)\) be a BiHom-algebra over a commutative ring \(\mathcal{R}\). An \(\mathcal{R}\)-submodule \(\mathcal{N} \subseteq \mathcal{M}\) is said to be a \textbf{subalgebra} if it is closed under the BiHom-product \(\star\) and stable under the twisting maps \(\psi\) and \(\phi\); that is, \(\psi(\mathcal{N}) \subseteq \mathcal{N}\), \(\phi(\mathcal{N}) \subseteq \mathcal{N}\), and \(\mathcal{N} \star \mathcal{N} \subseteq \mathcal{N}\).

A subalgebra 
\(\mathfrak{I} \subseteq \mathcal{M}\) is called a \textbf{(two-sided) ideal} if it satisfies \(\mathfrak{I} \star \mathcal{M}+\mathcal{M} \star \mathfrak{I} \subseteq \mathfrak{I}\)
, and is likewise preserved by both twisting maps.
\end{definition}
We consider a \emph{not necessarily associative algebra} \((\mathcal{A}, \cdot)\), i.e. a \(\mathcal{R}\)-module \(\mathcal{A}\) equipped with a bilinear operation \(\cdot : \mathcal{A} \times \mathcal{A} \to \mathcal{A}\), given by \((x, y) \mapsto x \cdot y\). Next, we introduce a fundamental definition that will play a key role in the development that follows.

\begin{definition}\label{graded_algebra}
Let \((\mathcal{A}, \cdot)\) be an algebra that is not necessarily associative, and let \((G, +)\) be a set equipped with a binary operation \(+\). We say that \(\mathcal{A}\) is a \textbf{\(G\)-graded algebra} if there exists a direct sum decomposition
\[
\mathcal{A} = \bigoplus_{g \in G} \mathcal{A}_{g},
\]
where each \(\mathcal{A}_{g}\) is an \(\mathcal{R}\)-submodule of \(\mathcal{A}\), and the product satisfies \(\mathcal{A}_{g} \cdot \mathcal{A}_{h} \subseteq \mathcal{A}_{g + h},\text{ for all } g,h \in G.\)
\end{definition}
Let \(\mathcal{M}_{I}(\mathcal{R})\) denote the set of all \(I \times I\) matrices over a commutative ring \(\mathcal{R}\), each having only finitely many non-zero entries. Equipped with the usual matrix multiplication (denoted by juxtaposition), \(\mathcal{M}_{I}(\mathcal{R})\) forms an associative algebra. Next, we will give the BiHom-version of \(\mathcal{M}_{I}(\mathcal{R})\).

\begin{definition}
    The structure \((\mathcal{M}_{I}(\mathcal{R}), \star, \psi, \phi)\) constitutes a \textbf{BiHom-algebra} where the maps \(\psi, \phi : \mathcal{M}_{I}(\mathcal{R}) \to \mathcal{M}_{I}(\mathcal{R})\) are two commuting algebra automorphisms likewise in Definition~\ref{algebra_based_module} and a new product on \(\mathcal{M}_{I}(\mathcal{R})\) by
\[
x \star y := \psi(x)\phi(y), \quad \text{for all } x, y \in \mathcal{M}_{I}(\mathcal{R}).
\]
\end{definition}
Suppose now that the associative algebra \(\mathcal{M}_{I}(\mathcal{R})\) is endowed with a grading by an abelian group \((G, +)\), so that
\[
\mathcal{M}_{I}(\mathcal{R}) = \bigoplus_{g \in G} \mathcal{M}_{I}(\mathcal{R})_{g}.
\]
Assume further that the automorphisms \(\psi\) and \(\phi\) are compatible with the grading, in the sense that there exist two commuting group automorphisms \(\alpha, \beta : G \to G\) satisfying
\begin{align}
\psi\left(\mathcal{M}_{I}(\mathcal{R})_{g}\right) &\subseteq \mathcal{M}_{I}(\mathcal{R})_{\alpha(g)}, \label{1.1}\\
\phi\left(\mathcal{M}_{I}(\mathcal{R})_{g}\right) &\subseteq \mathcal{M}_{I}(\mathcal{R})_{\beta(g)}, \label{1.2}
\end{align}
for all \(g \in G\) (see~\cite{K}).

Now, in view of Definition~\ref{graded_algebra}, we can obtain the grading over the BiHom-algebra \((\mathcal{M}_{I}(\mathcal{R}), \star, \psi, \phi)\).
\begin{definition}
The BiHom-algebra \((\mathcal{M}_{I}(\mathcal{R}), \star, \psi, \phi)\) admits a \textbf{grading} by the regular BiHom-group \((G, +_{(\alpha, \beta)}, \alpha, \beta, \mathbf{0})\), where the binary operation is defined by
\[
g_1 +_{(\alpha,\beta)} g_2 := \alpha(g_1) + \beta(g_2).
\]
Accordingly, we may write
\[
\mathcal{M}_{I}(\mathcal{R}) = \bigoplus_{g \in G} \mathcal{M}_{I}(\mathcal{R})_g,
\]
in such a way that the multiplication satisfies
\[
\mathcal{M}_{I}(\mathcal{R})_{g_1} \star \mathcal{M}_{I}(\mathcal{R})_{g_2} \subseteq \mathcal{M}_{I}(\mathcal{R})_{g_1 +_{(\alpha,\beta)} g_2}
\]
for all \(g_1, g_2 \in G\).
\end{definition}
\begin{definition}
A \textbf{graded ideal} of the graded BiHom-algebra \((\mathcal{M}_{I}(\mathcal{R}), \star, \psi, \phi)\), with the decomposition
\[
\mathcal{M}_{I}(\mathcal{R}) = \bigoplus_{g \in G} \mathcal{M}_{I}(\mathcal{R})_
g,
\] is defined as an  \(\mathcal{R}\)-submodule \(\mathfrak{I} \subseteq \mathcal{M}_{I}(\mathcal{R})\) such that
\[ \mathfrak{I} \star \mathcal{M}_{I}(\mathcal{R}) + \mathcal{M}_{I}(\mathcal{R}) \star \mathfrak{I} \subseteq \mathfrak{I}, \quad \psi(\mathfrak{I}) \subseteq \mathfrak{I}, \quad \text{and} \quad \phi(\mathfrak{I}) \subseteq \mathfrak{I},\]
and which admits a graded decomposition of the form
\[
\mathfrak{I} = \bigoplus_{g \in G} \mathfrak{I}_{g}, \quad \text{where } \mathfrak{I}_{g} := \mathfrak{I} \cap \mathcal{M}_{I}(\mathcal{R})_
g.
\]

The graded BiHom-algebra \((\mathcal{M}_{I}(\mathcal{R}), \star, \psi, \phi)\) is said to be \textbf{graded simple} if \(\mathcal{M}_{I}(\mathcal{R}) \star \mathcal{M}_{I}(\mathcal{R}) \neq 0\), and the only graded ideals it admits are the trivial ones: \(\{ \mathbf{0} \}\) and \(\mathcal{M}_{I}(\mathcal{R})\) itself.
\end{definition}

\begin{remark}
    It should be noted that in all of the above references the base ring of scalars \(\mathcal{R}\) is always a field \(\mathbb{K}\). So the concept of BiHom-algebra given in Definition~\ref{algebra_based_module} is, in this sense, more general.
\end{remark}

\section{Equivalence relation: \(\sim\)}\label{sect3}

We consider the BiHom-algebra \((\mathcal{M}_{I}(\mathcal{R}), \star, \psi, \phi)\), which arises as a \emph{double twisting} of the classical matrix algebra \((\mathcal{M}_{I}(\mathcal{R}), \cdot)\) by means of two commuting algebra automorphisms \(\psi\) and \(\phi\). The new multiplication is defined as
\[
x \star y := \psi(x) \cdot \phi (y),
\]
for all \(x, y \in \mathcal{M}_{I}(\mathcal{R})\), and equips the space with a BiHom-associative structure.

In addition, we endow \(\mathcal{M}_{I}(\mathcal{R})\) with a grading over a \emph{regular BiHom-group} \((G, +_{(\alpha,\beta)}, \alpha, \beta)\), where \(\alpha\) and \(\beta\) are two commuting automorphisms of the underlying abelian group \((G, +)\), and the twisted group operation is given by
\[
g +_{(\alpha, \beta)} g' := \alpha(g) + \beta
(g').
\]
In this framework, we assume the existence of a decomposition
\begin{equation} \label{decomposition_formula}
\mathcal{M}_{I}(\mathcal{R}) = \bigoplus_{g \in G} \mathcal{M}_{I}(\mathcal{R})_
{g},
\end{equation}
where each \(\mathcal{M}_{I}(\mathcal{R})_g\) is an \(\mathcal{R}\)-submodule associated with the degree \(g\), and the grading is compatible with the BiHom-structure defined by the maps \(\alpha\), \(\beta\), \(\psi\), and \(\phi\).

And it is as the direct sum of \(\mathcal{R}\)-submodules satisfying 
\begin{align}\label{relation_between}
    \psi(\mathcal{M}_{I}(\mathcal{R})_{g}) \subset \mathcal{M}_{I}(\mathcal{R})_{\alpha(g)} \text{ and } \phi(\mathcal{M}_{I}(\mathcal{R})_{g}) \subset \mathcal{M}_{I}(\mathcal{R})_{\beta(g)}
\end{align}
with the relation 
\begin{align}\label{2.3}
\mathcal{M}_{I}(\mathcal{R})_{g} \star \mathcal{M}_{I}(\mathcal{R})_{g^{\prime}} \subset \mathcal{M}_{I}(\mathcal{R})_{\alpha(g) + \beta(g')}
\end{align}
for all \(g, g' \in G\).

\begin{definition}
The \textbf{support} of the grading~\eqref{decomposition_formula} is defined as the set
\[ \Sigma := \{g \in G \setminus \{0\} : \mathcal{M}_{I}(\mathcal{R})_{g} \neq 0\}.\]
We say that the support is \textbf{symmetric} if, for every \(g \in \Sigma\), it follows that \(-g \in \Sigma\).
\end{definition}

\begin{remark}\label{power_properties}
From equation~\eqref{relation_between}, it follows that if \(g \in \Sigma\), then every element of the form \(\alpha^{i} \circ \beta^{j}(g)\), for all \(i, j \in \mathbb{N}\), also belongs to \(\Sigma\).
\end{remark}

In this section, we develop a notion of \emph{connections} within the support of the grading, which will serve as a central tool in the structural analysis of the graded BiHom-algebra \((\mathcal{M}_{I}(\mathcal{R}), \star, \psi, \phi)\).
Throughout the discussion, we assume that the grading of \((\mathcal{M}_{I}(\mathcal{R}), \star, \psi, \phi)\) admits a symmetric support \(\Sigma\), and we shall refer to the decomposition
\[
\mathcal{M}_{I}(\mathcal{R}) = \bigoplus_{g \in G} \mathcal{M}_{I}(\mathcal{R})_{g} = \mathcal{M}_{I}(\mathcal{R})_{0} \oplus \left( \bigoplus_{g \in \Sigma} \mathcal{M}_{I}(\mathcal{R})_{g} \right)
\]
as the associated splitting of the algebra.

\begin{definition}\label{connection}
Let \(g, g' \in \Sigma\). We say that \(g\) is \textbf{connected} to \(g'\) if there exists a finite subset \(\{g_1, g_2, \ldots, g_k\} \subseteq \Sigma\) such that the following conditions hold:

\begin{itemize}
    \item If \(k = 1\), then
    \[
    g_1 \in \left\{\alpha^i \circ \beta^j(g) \mid i,j \in \mathbb{N} \right\} \cap \left\{ \pm \alpha^m \circ \beta^n(g') \mid m,n \in \mathbb{N} \right\}.
    \]

    \item If \(k \geq 2\), then:
    \begin{enumerate}
        \item \(g_1 \in \left\{ \alpha^i \circ \beta^j(g) \mid i,j \in \mathbb{N} \right\}\),\label{1}
        
        \item The successive iterated sums
        \[
        \begin{aligned}
        &\alpha(g_1) + \beta(g_2), \\
        &\alpha^2(g_1) + \alpha \circ \beta(g_2) + \beta(g_3), \\
        &\alpha^3(g_1) + \alpha^2 \circ \beta(g_2) + \alpha \circ \beta(g_3) + \beta(g_4), \\
        &\quad \vdots \\
        &\alpha^{k-2}(g_1) + \alpha^{k-3} \circ \beta(g_2) + \alpha^{k-4} \circ \beta(g_3) + \cdots + \beta(g_{k-1})
        \end{aligned}
        \]
        all belong to \(\Sigma\),\label{2}
        
        \item The total sum
        \[
        \alpha^{k-1}(g_1) + \alpha^{k-2} \circ \beta(g_2) + \alpha^{k-3} \circ \beta(g_3) + \cdots + \beta(g_k)
        \]
        belongs to \(\left\{ \pm \alpha^m \circ \beta^n(g') \mid m,n \in \mathbb{N} \right\}.
        \)\label{3}
    \end{enumerate}
\end{itemize}

In this case, we refer to the sequence \(\{g_1, \ldots, g_k\}\) as a \emph{connection} from \(g\) to \(g'\).
\end{definition}

\begin{proposition}\label{prop_case1}
In the case \(k = 1\), \(g\) is connected to \(g'\) if and only if there exist \(x, y \in \mathbb{Z}\) and \(\epsilon \in \{ \pm 1 \}\) such that
\[g' = \epsilon \cdot \alpha^{x} \circ \beta^{y}(g).\]
\end{proposition}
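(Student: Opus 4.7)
The plan is to unfold the $k=1$ clause of Definition~\ref{connection} into an explicit algebraic identity in $G$, and then to convert that identity into the ``integer exponent with sign'' formulation of the proposition. Concretely, in the case $k=1$ the definition says that $g$ is connected to $g'$ if and only if there exist $i,j,m,n \in \mathbb{N}$ and $\epsilon \in \{\pm 1\}$ such that
\[
\alpha^{i}\circ\beta^{j}(g) \;=\; \epsilon\,\alpha^{m}\circ\beta^{n}(g'),
\]
with the common value lying in $\Sigma$. The whole proof then amounts to passing between this data and the triple $(x,y,\epsilon) \in \mathbb{Z}\times\mathbb{Z}\times\{\pm 1\}$, repeatedly exploiting that $\alpha$ and $\beta$ are commuting \emph{automorphisms} of the abelian group $G$, so that all their integer powers make sense and they commute with negation.

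For the forward direction, I would apply $\alpha^{-m}\circ\beta^{-n}$ to both sides of the displayed equation. Commutativity of $\alpha,\beta$ turns the left-hand side into $\alpha^{i-m}\circ\beta^{j-n}(g)$, while the right-hand side collapses to $\epsilon g'$. Setting $x := i-m$ and $y := j-n$, which lie in $\mathbb{Z}$, and using $\epsilon^{-1} = \epsilon$, one obtains $g' = \epsilon\,\alpha^{x}\circ\beta^{y}(g)$.

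For the reverse direction, given $g' = \epsilon\,\alpha^{x}\circ\beta^{y}(g)$ with $x,y\in\mathbb{Z}$, I would produce the required element of the intersection by shifting negative exponents into non-negative territory. Set $m := \max\{0,-x\}$ and $n := \max\{0,-y\}$, so that $i := x+m$ and $j := y+n$ also lie in $\mathbb{N}$, and put $g_1 := \alpha^{i}\circ\beta^{j}(g)$. By Remark~\ref{power_properties} we have $g_1 \in \Sigma$, and a direct rearrangement using the commutativity of $\alpha,\beta$ yields $g_1 = \epsilon\,\alpha^{m}\circ\beta^{n}(g')$, so $g_1$ lies in the intersection demanded by Definition~\ref{connection} and witnesses the connection.

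I do not foresee a genuine obstacle: the proposition is essentially a dictionary between the ``natural exponent with parity'' data $(i,j,m,n,\epsilon)$ in Definition~\ref{connection} and the ``integer exponent with sign'' data $(x,y,\epsilon)$ of the proposition. The only mildly delicate point is keeping all exponents in $\mathbb{N}$ when translating integer shifts back into natural ones, which the $\max$ construction handles cleanly, together with the harmless observation that the sign $\epsilon$ commutes with any group homomorphism, so it can be moved freely past $\alpha^{m}\circ\beta^{n}$.
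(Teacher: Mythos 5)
Your proof is correct, and the paper in fact gives no proof of this proposition at all, so there is nothing to compare against. Your unfolding of the $k=1$ clause of Definition~\ref{connection} into the identity $\alpha^{i}\circ\beta^{j}(g)=\epsilon\,\alpha^{m}\circ\beta^{n}(g')$ with $i,j,m,n\in\mathbb{N}$ is exactly right, the forward direction via applying $\alpha^{-m}\circ\beta^{-n}$ is sound, and the reverse direction correctly handles the one point that actually requires care: Remark~\ref{power_properties} is only stated for non-negative exponents, so the element $g_1$ witnessing the connection must be produced with exponents in $\mathbb{N}$, which your $\max$ construction does cleanly while Remark~\ref{power_properties} then guarantees $g_1\in\Sigma$ as required. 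The only cosmetic point worth flagging is that you should state explicitly that $g_1\in\Sigma$ is needed for $\{g_1\}$ to qualify as a connection under Definition~\ref{connection}, and that this is precisely what the appeal to Remark~\ref{power_properties} supplies; you gesture at this but it deserves one clear sentence.
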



\begin{lemma}\label{lemma_g}
For each \(g \in \Sigma\), we have that \(\alpha^{p}\circ\beta^{\overline{p}}(g)\) is connected to \(\epsilon \alpha^{q}\circ\beta^{\overline{q}}(g)\) for all \(p,\overline{p},q,\overline{q} \in \mathbb{N}\) and \(\epsilon \in \{\pm\}\).
\end{lemma}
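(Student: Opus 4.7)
The plan is to recognise this as a direct application of Proposition~\ref{prop_case1}, producing a connection of length $k = 1$. Write $h := \alpha^{p} \circ \beta^{\overline{p}}(g)$ and $h' := \alpha^{q} \circ \beta^{\overline{q}}(g)$; the goal is then to show that $h$ is connected to $\epsilon h'$.

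The first step is to verify that both endpoints lie in $\Sigma$, since Definition~\ref{connection} only makes sense for elements of the support. Since $g \in \Sigma$, Remark~\ref{power_properties} gives $h, h' \in \Sigma$, and the assumed symmetry of $\Sigma$ handles the case $\epsilon = -$, yielding $-h' \in \Sigma$ as well. The second step is to exhibit the one-term connection. Set $x := q - p$ and $y := \overline{q} - \overline{p}$. Because $\alpha$ and $\beta$ are commuting automorphisms of $G$, the composition $\alpha^{x} \circ \beta^{y}$ is a well-defined automorphism of $G$ for any $x, y \in \mathbb{Z}$, and a direct computation gives
\[
\alpha^{x} \circ \beta^{y}(h) \;=\; \alpha^{q-p} \circ \beta^{\overline{q}-\overline{p}}\bigl(\alpha^{p} \circ \beta^{\overline{p}}(g)\bigr) \;=\; \alpha^{q} \circ \beta^{\overline{q}}(g) \;=\; h'.
\]
Hence $\epsilon h' = \epsilon \cdot \alpha^{x} \circ \beta^{y}(h)$, which is precisely the criterion of Proposition~\ref{prop_case1}, so $h$ is connected to $\epsilon h'$.

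There is no real obstacle here, but the point worth flagging is that the indices $q - p$ and $\overline{q} - \overline{p}$ may be negative integers, whereas the raw Definition~\ref{connection} only allows exponents in $\mathbb{N}$. This is exactly the gap that Proposition~\ref{prop_case1} closes: by exploiting the invertibility of $\alpha$ and $\beta$ as group automorphisms, it lets us interpret the single chosen $g_{1}$ via integer shifts on either side, so that the possibly negative differences $q - p$ and $\overline{q} - \overline{p}$ cause no difficulty.
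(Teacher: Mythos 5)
Your proof is correct and takes essentially the same route as the paper: verify via Remark~\ref{power_properties} and the symmetry of $\Sigma$ that both endpoints lie in $\Sigma$, then invoke Proposition~\ref{prop_case1} with the integer exponents $q-p$ and $\overline{q}-\overline{p}$. The only cosmetic difference is that the paper makes the length-one witness explicit, taking the singleton $\{\alpha^{r}\circ\beta^{r}(g)\}$ with $r=\max\{p,\overline{p},q,\overline{q}\}$, which is exactly the device that realises the possibly negative shifts you correctly flag as the point Proposition~\ref{prop_case1} must absorb.
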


\begin{proof}
By Remark~\ref{power_properties}, both \(\alpha^{p} \circ \beta^{\overline{p}}(g)\) and \(\epsilon \cdot \alpha^{q} \circ \beta^{\overline{q}}(g)\) lie in \(\Sigma\). Set \(r = \max\{p, \overline{p}, q, \overline{q}\}\). Then, according to Proposition~\ref{prop_case1}, the singleton \(\{ \alpha^{r} \circ \beta^{r}(g) \}\) forms a connection from \(\alpha^{p} \circ \beta^{\overline{p}}(g)\) to \(\epsilon \cdot \alpha^{q} \circ \beta^{\overline{q}}(g)\).
\end{proof}

\begin{lemma}\label{lemma_2.6}
Let \(\{g_{1}, \ldots, g_{k}\}\) be a connection from \(g\) to \(g'\) such that \(g_{1} = \alpha^{i} \circ \beta^{j}(g)\) for some \(i, j \in \mathbb{N}\). Then, for all \(r, s \in \mathbb{N}\) with \(r \geq i\) and \(s \geq j\), there exists a connection \(\{\overline{g}_{1}, \ldots, \overline{g}_{k}\}\) from \(g\) to \(g'\) such that \(\overline{g}_{1} = \alpha^{r} \circ \beta^{s}(g)\).
\end{lemma}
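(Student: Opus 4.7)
The natural approach is to produce the new connection by shifting the entire original sequence via the commuting automorphisms. Set $t := r - i$ and $u := s - j$, both non-negative integers, and define
\[
\overline{g}_{\ell} := \alpha^{t} \circ \beta^{u}(g_{\ell}) \quad \text{for every } 1 \leq \ell \leq k.
\]
Because $\alpha$ and $\beta$ commute, $\overline{g}_{1} = \alpha^{t}\circ\beta^{u}\bigl(\alpha^{i}\circ\beta^{j}(g)\bigr) = \alpha^{r}\circ\beta^{s}(g)$, which is what condition~\eqref{1} of Definition~\ref{connection} requires.

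Next I would verify that each $\overline{g}_{\ell}$ lies in $\Sigma$: by hypothesis $g_{\ell}\in\Sigma$, and Remark~\ref{power_properties} gives $\alpha^{t}\circ\beta^{u}(g_{\ell})\in\Sigma$. The heart of the argument is then to recognise that every partial sum appearing in condition~\eqref{2} for $\{\overline{g}_{1},\dots,\overline{g}_{k}\}$ is simply the image, under $\alpha^{t}\circ\beta^{u}$, of the corresponding partial sum for $\{g_{1},\dots,g_{k}\}$. Indeed, since $\alpha$ and $\beta$ are commuting group homomorphisms of $(G,+)$, for every $1\le m\le k-2$ one has
\[
\alpha^{m}(\overline{g}_{1}) + \alpha^{m-1}\circ\beta(\overline{g}_{2}) + \cdots + \beta(\overline{g}_{m+1})
= \alpha^{t}\circ\beta^{u}\!\left(\alpha^{m}(g_{1}) + \alpha^{m-1}\circ\beta(g_{2}) + \cdots + \beta(g_{m+1})\right).
\]
Since the expression inside the parentheses belongs to $\Sigma$ by hypothesis, another application of Remark~\ref{power_properties} shows that the left-hand side lies in $\Sigma$, establishing condition~\eqref{2}.

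Finally, for condition~\eqref{3}, the same homomorphism identity applied to the total sum gives
\[
\alpha^{k-1}(\overline{g}_{1}) + \alpha^{k-2}\circ\beta(\overline{g}_{2}) + \cdots + \beta(\overline{g}_{k})
= \alpha^{t}\circ\beta^{u}\!\left(\alpha^{k-1}(g_{1}) + \cdots + \beta(g_{k})\right).
\]
By hypothesis the right-hand side is $\epsilon\,\alpha^{m}\circ\beta^{n}(g')$ for some $m,n\in\mathbb{N}$ and $\epsilon\in\{\pm1\}$, so after applying $\alpha^{t}\circ\beta^{u}$ it becomes $\epsilon\,\alpha^{m+t}\circ\beta^{n+u}(g')$, which still lies in $\{\pm\alpha^{a}\circ\beta^{b}(g')\mid a,b\in\mathbb{N}\}$. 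This yields the required connection. There is no genuine obstacle; the only point requiring care is the bookkeeping that confirms the partial sums transform coherently, which follows cleanly from the commutativity of $\alpha$ and $\beta$ together with their being group homomorphisms.
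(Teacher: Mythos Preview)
Your proof is correct and follows exactly the same approach as the paper: define $\overline{g}_\ell := \alpha^{r-i}\circ\beta^{s-j}(g_\ell)$ and invoke Remark~\ref{power_properties} together with Definition~\ref{connection}. In fact you supply more of the verification details than the paper does, which simply states that the claim is ``straightforward to verify.''
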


\begin{proof}
By Remark~\ref{power_properties}, we have
\[
\left\{ \alpha^{r-i} \circ \beta^{s-j}(g_1), \ldots, \alpha^{r-i} \circ \beta^{s-j}(g_k) \right\} \subseteq \Sigma.
\]
Define \(\overline{g}_\ell := \alpha^{r-i} \circ \beta^{s-j}(g_\ell)\) for each \(\ell = 1, \ldots, k\). Then, using Remark~\ref{power_properties} and Definition~\ref{connection}, it is straightforward to verify that \(\{\overline{g}_{1}, \ldots, \overline{g}_{k}\}\) is a connection from \(g\) to \(g'\), with
\(
\overline{g}_{1} = \alpha^{r-i} \circ \beta^{s-j} \left( \alpha^{i} \circ \beta^{j}(g) \right) = \alpha^{r} \circ \beta^{s}(g).
\)
\end{proof}

\begin{lemma}\label{lemma_2.7}
Let \(\{g_{1}, \ldots, g_{k}\}\) be a connection from \(g\) to \(g'\), satisfying one of the following:

\begin{itemize}
    \item If \(k = 1\), then \(g_1 = \epsilon \cdot \alpha^{i} \circ \beta^{j}(g')\);
    
    \item If \(k \geq 2\), then
    \[
    \alpha^{k-1}(g_1) + \alpha^{k-2} \circ \beta(g_2) + \alpha^{k-3} \circ \beta(g_3) + \cdots + \beta(g_k) = \epsilon \cdot \alpha^{i} \circ \beta^{j}(g'),
    \]
\end{itemize}
where \(i, j \in \mathbb{N}\) and \(\epsilon \in \{ \pm 1 \}\).

Then, for any \(r, s \in \mathbb{N}\) such that \(r \geq i\) and \(s \geq j\), there exists a connection \(\{\overline{g}_{1}, \ldots, \overline{g}_{k}\}\) from \(g\) to \(g'\) such that:

\begin{itemize}
    \item If \(k = 1\), then \(\overline{g}_1 = \epsilon \cdot \alpha^{r} \circ \beta^{s}(g')\);
    
    \item If \(k \geq 2\), then
    \[
    \alpha^{k-1}(\overline{g}_1) + \alpha^{k-2} \circ \beta(\overline{g}_2) + \alpha^{k-3} \circ \beta(\overline{g}_3) + \cdots + \beta(\overline{g}_k) = \epsilon \cdot \alpha^{r} \circ \beta^{s}(g').
    \]
\end{itemize}
\end{lemma}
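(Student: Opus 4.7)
The plan is to exploit the fact that the pair $(\alpha,\beta)$ acts by commuting additive automorphisms of the abelian group $(G,+)$, so that applying a single uniform operator $\alpha^{r-i}\circ\beta^{s-j}$ to every element of the given connection preserves its structure while rescaling the terminal sum from $\epsilon\cdot\alpha^{i}\circ\beta^{j}(g')$ to $\epsilon\cdot\alpha^{r}\circ\beta^{s}(g')$. Concretely, I define
$\overline{g}_{\ell} := \alpha^{r-i}\circ\beta^{s-j}(g_{\ell})$
for each $\ell = 1, \ldots, k$, and then verify that $\{\overline{g}_{1}, \ldots, \overline{g}_{k}\}$ satisfies Definition~\ref{connection}. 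The role of the hypotheses $r \geq i$ and $s \geq j$ is to guarantee that the exponents appearing in the rewritten orbit expressions remain nonnegative, so that the operator $\alpha^{r-i}\circ\beta^{s-j}$ is well-defined as a map $\mathbb{N}^{2}$-power.

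For $k = 1$, the single element $\overline{g}_{1}$ equals, by additivity, $\alpha^{r-i}\circ\beta^{s-j}(\epsilon\cdot\alpha^{i}\circ\beta^{j}(g')) = \epsilon\cdot\alpha^{r}\circ\beta^{s}(g')$, so the target-side condition is met. The source-side condition also persists: since the original connection requires $g_{1} = \alpha^{i_{0}}\circ\beta^{j_{0}}(g)$ for some $i_{0}, j_{0} \in \mathbb{N}$, one gets $\overline{g}_{1} = \alpha^{r-i+i_{0}}\circ\beta^{s-j+j_{0}}(g)$, which still lies in $\{\alpha^{m}\circ\beta^{n}(g) : m,n \in \mathbb{N}\}$.

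For $k \geq 2$, I check the three clauses of Definition~\ref{connection} in turn. Clause~\ref{1} follows exactly as in the $k=1$ case. Clause~\ref{2} follows because each iterated partial sum in the $\overline{g}_{\ell}$'s is, by additivity of $\alpha$ and $\beta$, the image under $\alpha^{r-i}\circ\beta^{s-j}$ of the corresponding partial sum in the original $g_{\ell}$'s; since each such original sum lies in $\Sigma$, Remark~\ref{power_properties} guarantees its image lies in $\Sigma$ as well. Clause~\ref{3} follows by the same additivity applied to the full sum: the new total is $\alpha^{r-i}\circ\beta^{s-j}$ applied to $\epsilon\cdot\alpha^{i}\circ\beta^{j}(g')$, which equals $\epsilon\cdot\alpha^{r}\circ\beta^{s}(g')$.

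The argument is the terminal-side analogue of Lemma~\ref{lemma_2.6}, where the same uniform $(\alpha,\beta)$-shift was applied from the initial side; the symmetry between the two lemmas is a direct consequence of the linearity of the iterated sums in the $g_{\ell}$. The only real step that needs care is confirming that $\alpha^{r-i}\circ\beta^{s-j}$ distributes across those iterated sums, which reduces to the fact that $\alpha$ and $\beta$ are group homomorphisms of $(G,+)$ that commute with each other. No computational obstacles are anticipated beyond this bookkeeping.
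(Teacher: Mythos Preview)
Your proposal is correct and follows essentially the same approach as the paper: define $\overline{g}_\ell := \alpha^{r-i}\circ\beta^{s-j}(g_\ell)$, use Remark~\ref{power_properties} to ensure membership in $\Sigma$, and then verify the connection conditions via the additivity and commutativity of $\alpha,\beta$. If anything, your write-up is slightly more explicit than the paper's in checking clause~\eqref{1} and the intermediate sums in clause~\eqref{2}.
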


\begin{proof}
By Remark~\ref{power_properties}, we have
\[
\left\{ \alpha^{r-i} \circ \beta^{s-j}(g_1), \ldots, \alpha^{r-i} \circ \beta^{s-j}(g_k) \right\} \subseteq \Sigma.
\]
Define \(\overline{g}_\ell := \alpha^{r-i} \circ \beta^{s-j}(g_\ell)\) for \(\ell = 1, \ldots, k\). It follows again from Remark~\ref{power_properties} and Definition~\ref{connection} that the sequence \(\{ \overline{g}_1, \ldots, \overline{g}_k \}\) forms a connection from \(g\) to \(g'\).

In the case \(k = 1\), we immediately obtain
\[
\overline{g}_1 = \alpha^{r-i} \circ \beta^{s-j} \left( \epsilon \cdot \alpha^i \circ \beta^j(g') \right) = \epsilon \cdot \alpha^r \circ \beta^s(g').
\]

If \(k \geq 2\), then we compute:
\begin{align*}
&\alpha^{k-1}(\overline{g}_1) + \alpha^{k-2} \circ \beta(\overline{g}_2) + \alpha^{k-3} \circ \beta(\overline{g}_3) + \cdots + \beta(\overline{g}_k) \\
&= \alpha^{r-i} \circ \beta^{s-j} \left( \alpha^{k-1}(g_1) + \alpha^{k-2} \circ \beta(g_2) + \cdots + \beta(g_k) \right) \\
&= \alpha^{r-i} \circ \beta^{s-j} \left( \epsilon \cdot \alpha^i \circ \beta^j(g') \right) = \epsilon \cdot \alpha^r \circ \beta^s(g').
\end{align*}
\end{proof}

\begin{theorem}\label{equivalence_relation}
The relation \(\sim\) on \(\Sigma\), defined by \(g \sim g'\) if and only if \(g\) is connected to \(g'\), is an equivalence relation.
\end{theorem}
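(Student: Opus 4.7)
The plan is to verify reflexivity, symmetry, and transitivity of $\sim$ on $\Sigma$ in turn, relying throughout on the flexibility afforded by Lemmas~\ref{lemma_2.6} and~\ref{lemma_2.7} together with the assumption that $\Sigma$ is symmetric.

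Reflexivity will follow immediately from Lemma~\ref{lemma_g}: taking $p = \bar{p} = q = \bar{q} = 0$ and $\epsilon = +$, the singleton $\{g\}$ is a connection from $g$ to itself.

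For symmetry, suppose $g \sim g'$ via $\{g_1, \ldots, g_k\}$. When $k = 1$, Proposition~\ref{prop_case1} gives $g' = \epsilon\alpha^{x}\beta^{y}(g)$ for some $x, y \in \mathbb{Z}$ and $\epsilon \in \{\pm 1\}$; since $\alpha$ and $\beta$ are automorphisms, inverting yields $g = \epsilon\alpha^{-x}\beta^{-y}(g')$, whence Proposition~\ref{prop_case1} again produces a $k=1$ connection from $g'$ to $g$. When $k \geq 2$, my plan is to build a reverse connection of appropriate length whose leading term is (by Lemma~\ref{lemma_2.7}) a sufficiently highly shifted $\alpha^{M}\beta^{N}(g')$ and whose remaining terms are drawn from $\{\pm g_\ell\} \subseteq \Sigma$ (using the symmetry of $\Sigma$) in reverse order, with signs chosen so that the intermediate BiHom-iterated sums are shifted copies of the original partial sums — hence in $\Sigma$ by Remark~\ref{power_properties} — and so that the terminal sum collapses to a signed element of $\{\alpha^{u}\beta^{v}(g) \mid u, v \in \mathbb{N}\}$, as required by clause (3) of Definition~\ref{connection}.

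For transitivity, suppose $\{g_1, \ldots, g_k\}$ connects $g$ to $g'$ with total sum $\epsilon_1\alpha^{m}\beta^{n}(g')$, and $\{h_1, \ldots, h_\ell\}$ connects $g'$ to $g''$ with $h_1 = \alpha^{i'}\beta^{j'}(g')$ and total sum $\epsilon_2\alpha^{m'}\beta^{n'}(g'')$. Choose $r \geq \max\{m, i'\}$ and $s \geq \max\{n, j'\}$. By Lemma~\ref{lemma_2.7} the first connection can be upgraded to $\{\bar{g}_1, \ldots, \bar{g}_k\}$ whose total sum equals $\epsilon_1\alpha^{r}\beta^{s}(g')$, and by Lemma~\ref{lemma_2.6} the second upgrades to $\{\bar{h}_1, \ldots, \bar{h}_\ell\}$ with $\bar{h}_1 = \alpha^{r}\beta^{s}(g')$. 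When $\epsilon_1 = +1$, I would concatenate as $\{\bar{g}_1, \ldots, \bar{g}_k, \bar{h}_2, \ldots, \bar{h}_\ell\}$ of length $k + \ell - 1$; using the recursion $S_i = \alpha(S_{i-1}) + \beta(p_{i+1})$ inherent in Definition~\ref{connection}, the partial sum at position $k + n - 1$ of the concatenated sequence coincides with the $n$-th partial (or total) sum of the upgraded $\bar{h}$-connection, so every intermediate sum lies in $\Sigma$ and the total sum is $\epsilon_2\alpha^{m'}\beta^{n'}(g'')$. When $\epsilon_1 = -1$, the same construction with each $\bar{h}_r$ replaced by $-\bar{h}_r$ (which still lies in $\Sigma$ by symmetry) absorbs the sign.

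The main obstacle I expect is the bookkeeping in the symmetry step: the asymmetry between clauses (1) and (3) of Definition~\ref{connection} (no sign on $g_1$, a $\pm$ allowed on the total sum) means that a naive termwise reversal of the original connection does \emph{not} land directly in a legal form, and one must first appeal to Lemma~\ref{lemma_2.7} to shift the base point of the reverse sequence high enough into the $\alpha,\beta$-tower before combining the symmetry of $\Sigma$ with Remark~\ref{power_properties} to certify both the intermediate sums and the correct shape of the terminal sum. Once this is handled, the transitivity concatenation proceeds mechanically from the two upgrade lemmas.
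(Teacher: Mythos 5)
Your plan matches the paper's approach in all three parts: reflexivity via Lemma~\ref{lemma_g}, symmetry split into $k=1$ and $k\geq 2$ cases, and transitivity by aligning the shift at the junction using Lemmas~\ref{lemma_2.6} and~\ref{lemma_2.7} and then concatenating (negating the tail when $\epsilon = -1$). Reflexivity and transitivity are essentially complete, and your observation that the reverse connection's base point must first be pushed high enough up the $\alpha,\beta$-tower (so that no negative exponents appear) is a real point the paper glosses over.

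The one substantive gap is in the $k\geq 2$ symmetry case, which you describe only at the level of a plan. Your phrase that the remaining terms of the reverse connection are ``drawn from $\{\pm g_\ell\}$'' is not accurate: a naive signed reversal of $g_k, g_{k-1}, \ldots, g_2$ does not make the iterated BiHom-sums telescope. What is needed — and what the paper writes explicitly — is the sequence $\{\alpha^{i-1}\circ\beta^{j}(g'),\ -g_k,\ -\alpha^{2}(g_{k-1}),\ \ldots,\ -\alpha^{2(\ell-1)}(g_{k-\ell+1}),\ \ldots,\ -\alpha^{2k-4}(g_2)\}$, where the exponent on $\alpha$ grows by $2$ at each step. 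It is precisely this growth that makes the partial sum after $\ell$ steps equal $\alpha^{2\ell-1}$ applied to the corresponding original partial sum $\alpha^{k-1-\ell}(g_1)+\cdots+\beta(g_{k-\ell})$, whence membership in $\Sigma$ follows from Remark~\ref{power_properties}; with constant-shift terms the telescoping fails. These terms still lie in $\Sigma$ by symmetry of $\Sigma$ and Remark~\ref{power_properties}, so the construction is legal, but it is not the one you sketched. Your instinct that Lemma~\ref{lemma_2.7} rescues the head term is correct; the body of the reverse sequence requires the quadratically-indexed $\alpha$-shifts rather than mere sign flips.
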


\begin{proof}
     For each $g \in \Sigma$, Lemma~\ref{lemma_g} can give us that $g \sim g$ and so $\sim$ is reflexive.\\
Let us see the symmetric character of $\sim$. If $g \sim g^{\prime}$, there exists a connection
\begin{equation}\label{2.4}
\left\{g_{1}, g_{2}, \ldots, g_{k}\right\} \subset \Sigma 
\end{equation}
from $g$ to $g^{\prime}$.

If $k=1$, we have $g_{1}=\alpha^{i}\circ\beta^{i'}(g)$ and $g_{1}=\epsilon \alpha^{j}\circ\beta^{j'}\left(g^{\prime}\right)$ with $i,i',j,j' \in \mathbb{N}$ and $\epsilon \in\{ \pm\}$. From here, it is clear that $\left\{\epsilon g_{1}\right\}$ is a connection from $g^{\prime}$ to $g$ and so $g^{\prime} \sim g$.

If $k \geq 2$, we have that the connection~\eqref{2.4} satisfies conditions ~\eqref{1},~\eqref{2} and ~\eqref{3} in Definition~\ref{connection}. First we observe the condition~\eqref{3}, let us distinguish two possibilities. In the first one
\begin{align}\label{2.5}
&\quad\alpha^{k-1}\left(g_{1}\right)+\alpha^{k-2}\circ\beta\left(g_{2}\right)+\alpha^{k-3}\circ\beta\left(g_{3}\right)+\cdots\\ \nonumber
&+\alpha^{i-1}\circ\beta\left(g_{k-i+1}\right)
+\cdots+\beta\left(g_{k}\right)=\alpha^{i}\circ\beta^j\left(g^{\prime}\right),
\end{align}
and in the second one
\begin{align}\label{2.6}
&\quad\alpha^{k-1}\left(g_{1}\right)+\alpha^{k-2}\circ\beta\left(g_{2}\right)+\alpha^{k-3}\circ\beta\left(g_{3}\right)+\cdots\\ \nonumber
&+\alpha^{i-1}\circ\beta\left(g_{k-i+1}\right)
+\cdots+\beta\left(g_{k}\right)=-\alpha^{i}\circ\beta^j\left(g^{\prime}\right),
\end{align}
for some $i,j \in \mathbb{N}$.

Suppose we have the first possibility~\eqref{2.5}. By Remark~\ref{power_properties} and the symmetry of $\Sigma$, we can consider the set
\begin{align}\label{2.7}
\{\alpha^{i-1}\circ\beta^j\left(g^{\prime}\right),-g_{k},-\alpha^{2}\left(g_{k-1}\right), \ldots,
-\alpha^{2i}\left(g_{k-i}\right), \ldots,-\alpha^{2 k-4}\left(g_{2}\right)\} \subset \Sigma
\end{align}

Let us show that this set is a connection from $g^{\prime}$ to $g$. It is clear that the set ~\eqref{2.7} satisfies condition~\eqref{1} of Definition~\ref{connection} so let us verify that the set~\eqref{2.7} satisfies condition~\eqref{2}. We have
\begin{align*}
&\quad\alpha\left(\alpha^{i-1}\circ\beta^j\left(g^{\prime}\right)\right)-\beta\left(g_{k}\right)
=\alpha^{i}\circ\beta^j\left(g^{\prime}\right)-\beta\left(g_{k}\right)\\
&= 
\alpha^{k-1}\left(g_{1}\right)+\alpha^{k-2}\circ\beta\left(g_{2}\right)+\alpha^{k-3}\circ\beta\left(g_{3}\right)+\cdots+\alpha^{k-i}\circ\beta\left(g_{i}\right)+\cdots+\alpha\circ\beta\left(g_{k-1}\right)\\
&=\alpha\left(\alpha^{k-2}\left(g_{1}\right)+\alpha^{k-3}\circ\beta\left(g_{2}\right)+\alpha^{k-4}\circ\beta\left(g_{3}\right)+\cdots+\alpha^{k-i-1}\circ\beta\left(g_{i}\right)+\cdots+\beta\left(g_{k-1}\right)\right),
\end{align*}
where the third equality is a consequence of equation~\eqref{2.5}.

By condition~\eqref{2} of Definition~\ref{connection} applied to the connection~\eqref{2.4}, we can see that $\alpha^{k-2}\left(g_{1}\right)+\alpha^{k-3}\circ\beta\left(g_{2}\right)+\alpha^{k-4}\circ\beta\left(g_{3}\right)+\cdots+\alpha^{k-i-1}\circ\beta\left(g_{i}\right)+\cdots+\beta\left(g_{k-1}\right) \in \Sigma$. Then using Remark~\ref{power_properties}, we deduce that 
$$
\alpha\left(\alpha^{i-1}\circ\beta^j\left(g^{\prime}\right)\right)-\beta\left(g_{k}\right)\in\Sigma
$$

For each $1 \leq i \leq k-2$ we also have that
\begin{align*}
&\quad\alpha^{i}\left(\alpha^{i-1}\circ\beta^j\left(g^{\prime}\right)\right)-\alpha^{i-1}\circ\beta\left(g_{k}\right)-\alpha^{i-2}\circ\beta\left(\alpha^{2}\left(g_{k-1}\right)\right)-\cdots-\beta\left(\alpha^{2 i-2}\left(g_{k-(i-1)}\right)\right)\\
&=\alpha^{i-1}\left(\alpha^{i}\circ\beta^j\left(g^{\prime}\right)-\beta\left(g_{k}\right)-\alpha\circ\beta\left(g_{k-1}\right)-\cdots-\alpha^{i-1}\circ\beta\left(g_{k-(i-1)}\right)\right)\\
&=\alpha^{i-1}\left(\alpha^{k-1}\left(g_{1}\right)+\alpha^{k-2}\circ\beta\left(g_{2}\right)+\cdots+\alpha^{i}\circ\beta\left(g_{k-i}\right)\right)\\
&=\alpha^{2 i-1}\left(\alpha^{k-1-i}\left(g_{1}\right)+\alpha^{k-2-i}\circ\beta\left(g_{2}\right)+\cdots+\beta\left(g_{k-i}\right)\right),
\end{align*}
with the third equality being a consequence of equation~\eqref{2.5}. Taking into account that, by condition~\eqref{2} of Definition~\ref{connection} applied to~\eqref{2.4},
$$
\alpha^{k-1-i}\left(g_{1}\right)+\alpha^{k-2-i}\circ\beta\left(g_{2}\right)+\cdots+\beta\left(g_{k-i}\right) \in \Sigma.
$$
Then we get as consequence of Remark~\ref{power_properties} that
$$
\alpha^{i}\left(\alpha^{i-1}\circ\beta^j\left(g^{\prime}\right)\right)-\alpha^{i-1}\circ\beta\left(g_{k}\right)-\alpha^{i-2}\circ\beta\left(\alpha^{2}\left(g_{k-1}\right)\right)-\cdots-\beta\left(\alpha^{2 i-2}\left(g_{k-(i-1)}\right)\right) \in \Sigma.
$$
We have showed that the set~\eqref{2.7} satisfies condition~\eqref{2} of Definition~\ref{connection}. It just remains to prove that this set also satisfies condition~\ref{3} of this definition. We have as above that

\begin{align*}
&\quad\alpha^{k-1}\left(\alpha^{i-1}\circ\beta^j\left(g^{\prime}\right)\right)-\alpha^{k-2}\circ\beta\left(g_{k}\right)-\alpha^{k-3}\circ\beta\left(\alpha^{2}\left(g_{k-1}\right)\right)-\cdots-\beta\left(\alpha^{2 k-4}\left(g_{2}\right)\right)\\
&=\alpha^{k-2}\left(\alpha^{i}\circ\beta^j\left(g^{\prime}\right)-\beta\left(g_{k}\right)-\alpha\circ\beta\left(g_{k-1}\right)-\cdots-\alpha^{k-2}\circ\beta\left(g_{2}\right)\right)\\
&=\alpha^{k-2}\left(\alpha^{k-1}\left(g_{1}\right)\right)\\
&=\alpha^{2k-3}(g_1).
\end{align*}

Condition~\eqref{1} of Definition~\ref{connection} applied to the connection~\eqref{2.4} gives us that $g_{1}=\alpha^{i}\beta^j(g)$ for some $i,j \in \mathbb{N}$ and so
\begin{align*}
&\quad\alpha^{k-1}\left(\alpha^{i-1}\circ\beta^j\left(g^{\prime}\right)\right)-\alpha^{k-2}\circ\beta\left(g_{k}\right)-\alpha^{k-3}\circ\beta\left(\alpha^{2}\left(g_{k-1}\right)\right)-\cdots-\beta\left(\alpha^{2 k-4}\left(g_{2}\right)\right)\\
&=\alpha^{2k-3}(g_1) \in\left\{\alpha^{m}\beta^n(g)~|~ m,n \in \mathbb{N}\right\}
\end{align*}

We have showed that the set~\eqref{2.7} is actually a connection from $g^{\prime}$ to $g$.\\
Now suppose we are in the second possibility given by equation~\eqref{2.6}. Then we can prove as in the above first possibility, given by equation~\eqref{2.5}, that
$$
\{\alpha^{i-1}\circ\beta^j\left(g^{\prime}\right),g_{k},\alpha^{2}\left(g_{k-1}\right), \ldots,
\alpha^{2i}\left(g_{k-i}\right), \ldots,\alpha^{2 k-4}\left(g_{2}\right)\} \subset \Sigma
$$
is a connection from $g^{\prime}$ to $g$. We conclude $g^{\prime} \sim g$ and so the relation $\sim$ is symmetric.

Finally, let us verify that $\sim$ is transitive. Suppose $g \sim g^{\prime}$ and $g^{\prime} \sim g^{\prime \prime}$, and write $\left\{g_{1}, \ldots, g_{k}\right\}$ for a connection from $g$ to $g^{\prime}$ and $\left\{\overline{g}_{1}, \ldots, \overline{g}_{r}\right\}$ for a connection from $g^{\prime}$ to $g^{\prime \prime}$. From here, we have

\begin{equation}\label{2.8}
\left\{
\begin{aligned}
& g_{1}=\epsilon \alpha^{m}\circ\beta^n\left(g^{\prime}\right), \text { when } k=1 \\
& \alpha^{k-1}(g_{1}) + \alpha^{k-2}\circ\beta(g_{2}) + \alpha^{k-3}\circ\beta(g_{3}) + \cdots+\\
&\alpha^{k-i}\circ\beta(g_{i}) + \cdots + \beta(g_{k})=\epsilon\alpha^{m}\circ\beta^n(g^{\prime}), \text { when } k\geq 2
\end{aligned}
\right.
\end{equation}

for some $m,n \in \mathbb{N}, \epsilon \in\{ \pm\}$. And
\begin{align}\label{2.9}
\overline{g}_{1}=\alpha^{q}\circ\beta^s\left(g^{\prime}\right)
\end{align}
for some $q,s \in \mathbb{N}$. By Lemma~\ref{lemma_2.6} and Lemma~\ref{lemma_2.7}. 

In the case $r=1$, suppose $m=q$ and $n=s$, we have $\overline{g}_{1}=\tau \alpha^{t}\circ\beta^u\left(g^{\prime \prime}\right)$ with $t,u \in \mathbb{N}$ and $\tau \in\{ \pm\}$. Since $m=q$ and $n=s$, then $g_{1}=\epsilon\alpha^{m}\circ\beta^n\left(g^{\prime}\right)=\epsilon \overline{g}_{1}=\epsilon \tau \alpha^{t}\circ\beta^u\left(g^{\prime \prime}\right)$ if $k=1$, and $
\alpha^{k-1}(g_{1}) + \alpha^{k-2}\circ\beta(g_{2}) + \alpha^{k-3}\circ\beta(g_{3}) + \cdots+\alpha^{k-i}\circ\beta(g_{i}) + \cdots + \beta(g_{k})=\epsilon\alpha^{m}\circ\beta^n(g^{\prime})=\epsilon \overline{g}_{1}=\epsilon \tau \alpha^{t}\circ\beta^u\left(g^{\prime \prime}\right),
$ if $k \geq 2$. From here, we get that $\left\{g_{1}, \ldots, g_{k}\right\}$ is also a connection from $g$ to $g^{\prime \prime}$.

In the case $r \geq 2$, it is straightforward to verify, taking into account equations~\eqref{2.8} and~\eqref{2.9}, and the fact $m=q$ and $n=s$, that $\left\{g_{1}, \ldots, g_{k}, \overline{g}_{2}, \ldots, \overline{g}_{r}\right\}$ is a connection from $g$ to $g^{\prime \prime}$ if $\epsilon=+$ in equation~\eqref{2.8}. Taking also into account the symmetry of $\Sigma$, that $\left\{g_{1}, \ldots, g_{k},-\overline{g}_{2}, \ldots,-\overline{g}_{r}\right\}$ it is if $\epsilon=-$ in equation~\eqref{2.8}. We have showed the connection relation is also transitive and so it is an equivalence relation.
\end{proof}

\section{Decompositions of Matrix algebras}\label{sect4}

Consider the quotient set
\[
\Sigma / \sim := \{ [g] \mid g \in \Sigma \}.
\]
In order to associate a suitable graded ideal \(\mathfrak{I}_{[g]}\) of the graded BiHom-algebra \(\left(\mathcal{M}_{I}(\mathcal{R}), \star, \psi, \phi\right)\) to each equivalence class \([g] \in \Sigma / \sim\) in thie section, we define
\[
\mathfrak{I}_{\mathbf{0},[g]} := \operatorname{Span}_{\mathcal{R}} \left\{ \mathcal{M}_{I}(\mathcal{R})_{\beta(g')} \star \mathcal{M}_{I}(\mathcal{R})_{-\alpha(g')} \mid g' \in [g] \right\} \subseteq \mathcal{M}_{I}(\mathcal{R})_{\mathbf{0}},
\]
and
\[
V_{[g]} := \bigoplus_{g' \in [g]} \mathcal{M}_{I}(\mathcal{R})_{g'}.
\]
Finally, we define
\[
\mathfrak{I}_{[g]} := \mathfrak{I}_{\mathbf{0},[g]} \oplus V_{[g]}.
\]

\begin{lemma}\label{lemma_subideal}
Let $g \in \Sigma$. We have $\mathfrak{I}_{[g]} \star \mathfrak{I}_{[g]} \subset \mathfrak{I}_{[g]}$, $\psi\left(\mathfrak{I}_{[g]}\right)\subset\mathfrak{I}_{[g]}$ and $\phi\left(\mathfrak{I}_{[g]}\right)\subset\mathfrak{I}_{[g]}$.
\end{lemma}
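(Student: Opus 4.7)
The plan is to decompose the $\star$-product using $\mathfrak{I}_{[g]} = \mathfrak{I}_{\mathbf{0},[g]} \oplus V_{[g]}$ into the four summands
\[
\mathfrak{I}_{\mathbf{0},[g]} \star \mathfrak{I}_{\mathbf{0},[g]},\quad \mathfrak{I}_{\mathbf{0},[g]} \star V_{[g]},\quad V_{[g]} \star \mathfrak{I}_{\mathbf{0},[g]},\quad V_{[g]} \star V_{[g]},
\]
and to verify that each lies in $\mathfrak{I}_{[g]}$; the $\psi$- and $\phi$-invariance will then be handled separately on $V_{[g]}$ and on the spanning set of $\mathfrak{I}_{\mathbf{0},[g]}$. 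The key recurring input is Lemma~\ref{lemma_g}: whenever $g' \in [g]$, also $\alpha(g'), \beta(g') \in [g]$; and since $\phi$ is a bijective grading-compatible automorphism, the inclusion~\eqref{1.2} in fact upgrades to $\phi(\mathcal{M}_{I}(\mathcal{R})_h) = \mathcal{M}_{I}(\mathcal{R})_{\beta(h)}$, which also places $\beta^{-1}(g')$ in $[g]$.

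For the two mixed summands, an element of $\mathfrak{I}_{\mathbf{0},[g]}$ has degree $\mathbf{0}$ and an element of $V_{[g]}$ has degree $g_0 \in [g]$, so by~\eqref{2.3} the two $\star$-products land in $\mathcal{M}_{I}(\mathcal{R})_{\beta(g_0)}$ and $\mathcal{M}_{I}(\mathcal{R})_{\alpha(g_0)}$ respectively, both inside $V_{[g]}$. For $V_{[g]} \star V_{[g]}$, pick homogeneous $x \in \mathcal{M}_{I}(\mathcal{R})_{g_1}$, $y \in \mathcal{M}_{I}(\mathcal{R})_{g_2}$ with $g_1, g_2 \in [g]$. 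If $\alpha(g_1) + \beta(g_2) \neq \mathbf{0}$, then the two-term sequence $\{g_1, g_2\}$ is a connection from $g_1$ to $\alpha(g_1) + \beta(g_2)$ in the sense of Definition~\ref{connection} (condition~\eqref{1} with $i = j = 0$, condition~\eqref{2} vacuous for $k = 2$, and condition~\eqref{3} with $\epsilon = +$, $m = n = 0$), hence $\alpha(g_1) + \beta(g_2) \sim g_1 \sim g$ and $x \star y \in V_{[g]}$. If instead $\alpha(g_1) + \beta(g_2) = \mathbf{0}$, setting $g' := \beta^{-1}(g_1)$ gives $g_1 = \beta(g')$ and $g_2 = -\alpha(g')$ (using $\alpha\beta = \beta\alpha$), with $g' \in [g]$ by the remark above, so $x \star y$ is by definition a generator of $\mathfrak{I}_{\mathbf{0},[g]}$.

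The main obstacle I expect is $\mathfrak{I}_{\mathbf{0},[g]} \star \mathfrak{I}_{\mathbf{0},[g]}$, where a product of two degree-$\mathbf{0}$ generators must actually be rewritten as a single generator of the defining span, not merely placed in $\mathcal{M}_{I}(\mathcal{R})_{\mathbf{0}}$. My plan there is to use BiHom-associativity to shift the bracket: for a generator $v_1 \star v_{-1}$ with $v_1 \in \mathcal{M}_{I}(\mathcal{R})_{\beta(g')}$, $v_{-1} \in \mathcal{M}_{I}(\mathcal{R})_{-\alpha(g')}$ and any $w \in \mathfrak{I}_{\mathbf{0},[g]}$,
\[
(v_1 \star v_{-1}) \star w = \psi(v_1) \star \bigl(v_{-1} \star \phi^{-1}(w)\bigr).
\]
Since $w$ has degree $\mathbf{0}$, so does $\phi^{-1}(w)$, and~\eqref{relation_between} together with~\eqref{2.3} then yields $\psi(v_1) \in \mathcal{M}_{I}(\mathcal{R})_{\beta(\tilde g)}$ and $v_{-1} \star \phi^{-1}(w) \in \mathcal{M}_{I}(\mathcal{R})_{-\alpha(\tilde g)}$ for $\tilde g := \alpha(g') \in [g]$ (by Lemma~\ref{lemma_g}), exhibiting the right-hand side as a spanning generator of $\mathfrak{I}_{\mathbf{0},[g]}$. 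The invariance statements $\psi(\mathfrak{I}_{[g]}) \subseteq \mathfrak{I}_{[g]}$ and $\phi(\mathfrak{I}_{[g]}) \subseteq \mathfrak{I}_{[g]}$ are then immediate: on $V_{[g]}$ they follow directly from~\eqref{relation_between} and Lemma~\ref{lemma_g}, while on a generator $v_1 \star v_{-1}$ of $\mathfrak{I}_{\mathbf{0},[g]}$ one uses $\psi(v_1 \star v_{-1}) = \psi(v_1) \star \psi(v_{-1})$ (and analogously for $\phi$) together with the same degree bookkeeping to recognize the image as a generator with parameter $\alpha(g')$ (resp.\ $\beta(g')$) in $[g]$.
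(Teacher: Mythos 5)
Your proof is correct and follows essentially the same route as the paper: decompose $\mathfrak{I}_{[g]}\star\mathfrak{I}_{[g]}$ into the four cross-terms, handle $V_{[g]}\star V_{[g]}$ by splitting on whether $\alpha(g_1)+\beta(g_2)$ vanishes (a two-element connection $\{g_1,g_2\}$ in the generic case, a generator of $\mathfrak{I}_{\mathbf{0},[g]}$ in the degenerate one), treat $\mathfrak{I}_{\mathbf{0},[g]}\star\mathfrak{I}_{\mathbf{0},[g]}$ by inserting $\phi\circ\phi^{-1}$ and applying BiHom-associativity, and derive $\psi$-, $\phi$-invariance from~\eqref{relation_between} together with the $\alpha^i\circ\beta^j$-stability of $[g]$. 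Your choice of reparametrization $\tilde g=\alpha(g')$ in the $\mathfrak{I}_{\mathbf{0},[g]}\star\mathfrak{I}_{\mathbf{0},[g]}$ step is a slightly cleaner bookkeeping than the paper's, but the substance is identical.
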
 

\begin{proof}

We begin by noting the identity
\begin{align}\label{3.1}
\left( \mathfrak{I}_{0,[g]} \oplus V_{[g]} \right) \star \left( \mathfrak{I}_{0,[g]} \oplus V_{[g]} \right) 
\subseteq \mathfrak{I}_{0,[g]} \star \mathfrak{I}_{0,[g]} 
+ \mathfrak{I}_{0,[g]} \star V_{[g]} 
+ V_{[g]} \star \mathfrak{I}_{0,[g]} 
+ V_{[g]} \star V_{[g]}.
\end{align}

We first consider the final summand in equation~\eqref{3.1}, namely \(V_{[g]} \star V_{[g]}\). Let \(g', g'' \in [g]\) be such that
\[
\mathcal{M}_{I}(\mathcal{R})_{g'} \star \mathcal{M}_{I}(\mathcal{R})_{g''} \neq \mathbf{0}.
\]
If \(g'' = -\beta^{-1} \circ \alpha(g')\), then clearly
\[
\mathcal{M}_{I}(\mathcal{R})_{g'} \star \mathcal{M}_{I}(\mathcal{R})_{g''} 
= \mathcal{M}_{I}(\mathcal{R})_{g'} \star \mathcal{M}_{I}(\mathcal{R})_{-\beta^{-1} \circ \alpha(g')} 
\subseteq \mathfrak{I}_{\mathbf{0},[g]}.
\]

On the other hand, if \(g'' \neq -g'\), then from the fact that \(\mathcal{M}_{I}(\mathcal{R})_{g'} \star \mathcal{M}_{I}(\mathcal{R})_{g''} \neq \mathbf{0}\), together with equation~\eqref{2.3}, we deduce that
\[
\alpha(g') + \beta(g'') \in \Sigma.
\]
It follows that \(\{g', g''\}\) forms a connection from \(g'\) to \(\alpha(g') + \beta(g'')\), and hence, by the transitivity of \(\sim\), we conclude that \(\alpha(g') + \beta(g'') \in [g]\). Thus,
\[
\mathcal{M}_{I}(\mathcal{R})_{g'} \star \mathcal{M}_{I}(\mathcal{R})_{g''} 
\subseteq \mathcal{M}_{I}(\mathcal{R})_{\alpha(g') + \beta(g'')} 
\subseteq V_{[g]}.
\]

Consequently, we obtain
\[
\left( \bigoplus_{g' \in [g]} \mathcal{M}_{I}(\mathcal{R})_{g'} \right) 
\star 
\left( \bigoplus_{g'' \in [g]} \mathcal{M}_{I}(\mathcal{R})_{g''} \right) 
\subseteq \mathfrak{I}_{\mathbf{0},[g]} \oplus V_{[g]},
\]
that is,
\begin{equation}\label{3.2}
V_{[g]} \star V_{[g]} \subseteq \mathfrak{I}_{\mathbf{0},[g]} \oplus V_{[g]}.
\end{equation}

Next, we consider the first summand \(\mathfrak{I}_{\mathbf{0},[g]} \star \mathfrak{I}_{\mathbf{0},[g]}\) in equation~\eqref{3.1}. By BiHom-associativity together with equations~\eqref{1.1} and~\eqref{1.2}, for any \(g', g'' \in [g]\), we have
\begin{align*}
&\quad \left( \mathcal{M}_{I}(\mathcal{R})_{g'} \star \mathcal{M}_{I}(\mathcal{R})_{-\beta^{-1} \circ \alpha(g')} \right) 
\star 
\left( \mathcal{M}_{I}(\mathcal{R})_{g''} \star \mathcal{M}_{I}(\mathcal{R})_{-\beta^{-1} \circ \alpha(g'')} \right) \\
&= \left( \mathcal{M}_{I}(\mathcal{R})_{g'} \star \mathcal{M}_{I}(\mathcal{R})_{-\beta^{-1} \circ \alpha(g')} \right)
\star 
\phi\left( \phi^{-1} \left( \mathcal{M}_{I}(\mathcal{R})_{g''} \star \mathcal{M}_{I}(\mathcal{R})_{-\beta^{-1} \circ \alpha(g'')} \right) \right) \\
&\subseteq \psi\left( \mathcal{M}_{I}(\mathcal{R})_{g'} \right)
\star 
\left( \mathcal{M}_{I}(\mathcal{R})_{-\beta^{-1} \circ \alpha(g')} 
\star 
\phi^{-1} \left( \mathcal{M}_{I}(\mathcal{R})_{g''} \star \mathcal{M}_{I}(\mathcal{R})_{-\beta^{-1} \circ \alpha(g'')} \right) \right) \\
&\subseteq \mathcal{M}_{I}(\mathcal{R})_{\alpha(g')} 
\star 
\mathcal{M}_{I}(\mathcal{R})_{-\beta^{-1} \circ \alpha^2(g')} 
\subseteq \mathfrak{I}_{\mathbf{0},[g]}.
\end{align*}

Hence,
\begin{equation}\label{3.3}
\mathfrak{I}_{\mathbf{0},[g]} \star \mathfrak{I}_{\mathbf{0},[g]} \subseteq \mathfrak{I}_{\mathbf{0},[g]}.
\end{equation}

Similarly, one can show that
\begin{equation}\label{3.4}
\mathfrak{I}_{\mathbf{0},[g]} \star V_{[g]} + V_{[g]} \star \mathfrak{I}_{\mathbf{0},[g]} \subseteq V_{[g]}.
\end{equation}

Finally, combining equations~\eqref{3.1}, \eqref{3.2}, \eqref{3.3} and~\eqref{3.4}, we obtain
\[
\mathfrak{I}_{[g]} \star \mathfrak{I}_{[g]} 
= \left( \mathfrak{I}_{\mathbf{0},[g]} \oplus V_{[g]} \right) 
\star 
\left( \mathfrak{I}_{\mathbf{0},[g]} \oplus V_{[g]} \right) 
\subseteq \mathfrak{I}_{[g]}.
\]

Now, from equations~\eqref{1.1} and~\eqref{1.2}, together with the previously established fact that \(g' \in [g]\) implies \(\alpha^i \circ \beta^j(g') \in [g]\) for all \(i,j \in \mathbb{N}\), it follows that
\[
\psi\left( \mathfrak{I}_{\mathbf{0},[g]} \right) \subseteq \mathfrak{I}_{\mathbf{0},[g]}, \quad \psi\left(V_{[g]}\right) = V_{[g]}, \quad \phi\left( \mathfrak{I}_{\mathbf{0},[g]} \right) \subseteq \mathfrak{I}_{\mathbf{0},[g]}, \quad \phi\left(V_{[g]}\right) = V_{[g]}.
\]

Hence, we conclude that
\[
\psi\left( \mathfrak{I}_{[g]} \right) \subseteq \mathfrak{I}_{[g]} \quad \text{and} \quad \phi\left( \mathfrak{I}_{[g]} \right) \subseteq \mathfrak{I}_{[g]}.
\]
\end{proof}

\begin{lemma}\label{B}
    Let $g \in \Sigma$. If $g^{\prime} \notin[g]$ then $\mathfrak{I}_{[g]} \star \mathfrak{I}_{\left[g^{\prime}\right]}=\mathbf{0}$.
\end{lemma}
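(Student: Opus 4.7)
My plan is to decompose the product $\mathfrak{I}_{[g]} \star \mathfrak{I}_{[g']} = (\mathfrak{I}_{\mathbf{0},[g]} \oplus V_{[g]}) \star (\mathfrak{I}_{\mathbf{0},[g']} \oplus V_{[g']})$ into its four cross terms and argue that each vanishes, with everything eventually reducing to the core assertion $V_{[g]} \star V_{[g']} = \mathbf{0}$.

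For the core assertion I would argue by contraposition. Suppose some homogeneous product $\mathcal{M}_{I}(\mathcal{R})_{h} \star \mathcal{M}_{I}(\mathcal{R})_{h'}$, with $h \in [g]$ and $h' \in [g']$, is nonzero; by~\eqref{2.3}, the element $t := \alpha(h) + \beta(h')$ satisfies $\mathcal{M}_{I}(\mathcal{R})_{t} \neq \mathbf{0}$, so $t \in \Sigma \cup \{\mathbf{0}\}$. In the degenerate case $t = \mathbf{0}$ one has $h' = -\beta^{-1}\circ\alpha(h)$; since each equivalence class is closed under $\alpha^{\pm 1}$, $\beta^{\pm 1}$ and the sign flip (applying Lemma~\ref{lemma_g} to the relevant auxiliary elements, which lie in $\Sigma$ because $\psi$ and $\phi$ are algebra automorphisms and therefore give actual equalities $\psi(\mathcal{M}_{I}(\mathcal{R})_{g}) = \mathcal{M}_{I}(\mathcal{R})_{\alpha(g)}$), one obtains $h \sim h'$, contradicting $[g] \neq [g']$. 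In the generic case $t \in \Sigma$, I would exhibit two length-two connections: the trivial $\{h, h'\}$ from $h$ to $t$ (condition~\eqref{2} is vacuous for $k = 2$, and condition~\eqref{3} reduces to $\alpha(h) + \beta(h') = \alpha^{0}\circ\beta^{0}(t)$), and the asymmetric $\{\beta^{2}(h'),\, \alpha^{2}(h)\}$ from $h'$ to $t$, for which condition~\eqref{1} is satisfied by $g_{1} = \alpha^{0}\circ\beta^{2}(h')$ and condition~\eqref{3} follows from $\alpha\beta = \beta\alpha$ via
\[
\alpha(\beta^{2}(h')) + \beta(\alpha^{2}(h)) = \alpha\circ\beta\bigl(\beta(h') + \alpha(h)\bigr) = \alpha\circ\beta(t).
\]
Transitivity of $\sim$ (Theorem~\ref{equivalence_relation}) then forces $h \sim t \sim h'$, contradicting $[g] \neq [g']$.

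The remaining three cross terms I would dispatch by BiHom-associativity, exploiting that $V_{[g]}$, $V_{[g']}$, $\mathfrak{I}_{\mathbf{0},[g]}$, $\mathfrak{I}_{\mathbf{0},[g']}$ are all stable under $\psi^{\pm 1}$ and $\phi^{\pm 1}$ (a consequence of Lemma~\ref{lemma_subideal} combined with the automorphism property of $\psi, \phi$). For $V_{[g]} \star \mathfrak{I}_{\mathbf{0},[g']}$, picking a spanning element $u \star v$ of $\mathfrak{I}_{\mathbf{0},[g']}$ with $u \in \mathcal{M}_{I}(\mathcal{R})_{\beta(h')}$, $v \in \mathcal{M}_{I}(\mathcal{R})_{-\alpha(h')}$, $h' \in [g']$, BiHom-associativity gives
\[
x \star (u \star v) = (\psi^{-1}(x) \star u) \star \phi(v),
\]
and since $\psi^{-1}(x) \in V_{[g]}$ and $u \in V_{[g']}$ (as $\beta(h') \in [g']$), the inner bracket lies in $V_{[g]} \star V_{[g']} = \mathbf{0}$. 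The symmetric term $\mathfrak{I}_{\mathbf{0},[g]} \star V_{[g']}$ is handled identically. Finally, $\mathfrak{I}_{\mathbf{0},[g]} \star \mathfrak{I}_{\mathbf{0},[g']}$ is dispatched by one further associativity rewrite $(u \star v) \star (u' \star v') = \psi(u) \star (v \star \phi^{-1}(u' \star v'))$, placing the inner bracket into $V_{[g]} \star \mathfrak{I}_{\mathbf{0},[g']} = \mathbf{0}$ via the $\phi^{-1}$-stability of $\mathfrak{I}_{\mathbf{0},[g']}$.

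The main obstacle is spotting the connection $\{\beta^{2}(h'), \alpha^{2}(h)\}$ from $h'$ to $t$: the asymmetry is forced because Definition~\ref{connection} only permits non-negative powers $\alpha^{i}\circ\beta^{j}$ of the starting element, so one cannot use the more natural $\{h', h\}$, whose sum $\alpha(h') + \beta(h)$ is generally \emph{not} of the form $\pm\alpha^{m}\circ\beta^{n}(t)$. The trick is to pre-apply $\beta^{2}$ and $\alpha^{2}$ so that the commutativity of $\alpha$ and $\beta$ can be used to collapse $\alpha\beta^{2}(h') + \alpha^{2}\beta(h)$ into a single image $\alpha\circ\beta(t)$. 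Everything else is routine verification against Definition~\ref{connection} or careful bookkeeping with BiHom-associativity.
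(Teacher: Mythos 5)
Your proof is correct and follows the same overall strategy as the paper: decompose $\mathfrak{I}_{[g]} \star \mathfrak{I}_{[g']}$ into four cross terms, establish $V_{[g]} \star V_{[g']} = \mathbf{0}$ by deriving a connection that contradicts $[g] \neq [g']$, and then dispatch the remaining terms by BiHom-associativity rewrites combined with $\psi^{\pm 1}, \phi^{\pm 1}$-stability. The only real divergence is in the core step: where you factor through the sum $t = \alpha(h) + \beta(h')$ by exhibiting two length-two connections $\{h, h'\}$ and $\{\beta^{2}(h'), \alpha^{2}(h)\}$ and then invoking transitivity, the paper exhibits a single length-three connection $\{g_1, g_2, -\beta^{-1}\circ\alpha^{2}(g_1)\}$ directly from $g_1$ to $g_2$. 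Both are valid; your version avoids a negative power of $\beta$ in the connecting chain, at the cost of an extra appeal to Theorem~\ref{equivalence_relation}. You are also more explicit than the paper about the degenerate case $\alpha(h)+\beta(h') = \mathbf{0}$, where $h' = -\beta^{-1}\circ\alpha(h)$ yields an immediate $k=1$ connection via Proposition~\ref{prop_case1}; the paper passes over this silently (its remark ``since $g_1 \neq -g_2$'' does not actually address the right degeneracy), so this is a welcome clarification. Your bookkeeping for the other three cross terms — reducing $\mathfrak{I}_{\mathbf{0},[g]} \star \mathfrak{I}_{\mathbf{0},[g']}$ to the mixed term and the mixed term to $V \star V$ — is slightly more economical than the paper's, which treats $\mathfrak{I}_{\mathbf{0}} \star \mathfrak{I}_{\mathbf{0}}$ directly with two rewrites and leaves the mixed terms as ``similar reasoning.''
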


\begin{proof}

We now consider the product between ideals corresponding to distinct equivalence classes. From equation~\eqref{3.5}, we have
\begin{align}\label{3.5}
\left( \mathfrak{I}_{\mathbf{0},[g]} \oplus V_{[g]} \right) 
\star 
\left( \mathfrak{I}_{\mathbf{0},[g']} \oplus V_{[g']} \right) 
&\subseteq 
\mathfrak{I}_{\mathbf{0},[g]} \star \mathfrak{I}_{\mathbf{0},[g']} 
+ \mathfrak{I}_{\mathbf{0},[g]} \star V_{[g']} 
+ V_{[g]} \star \mathfrak{I}_{\mathbf{0},[g']} 
+ V_{[g]} \star V_{[g']}.
\end{align}

We first analyse the fourth summand, \(V_{[g]} \star V_{[g']}\). Suppose, for contradiction, that there exist \(g_1 \in [g]\) and \(g_2 \in [g']\) such that
\[
\mathcal{M}_{I}(\mathcal{R})_{g_1} \star \mathcal{M}_{I}(\mathcal{R})_{g_2} \neq \mathbf{0}.
\]
Since \(g_1 \neq -g_2\), it follows from equation~\eqref{2.3} that
\[
\alpha(g_1) + \beta(g_2) \in \Sigma.
\]
Therefore, the set \(\{ g_1, g_2, -\beta^{-1} \circ \alpha^2(g_1) \}\) forms a connection between \(g_1\) and \(g_2\). By the transitivity of the connection relation, we deduce that \(g \sim g'\), i.e., \([g] = [g']\), which contradicts the assumption that they belong to distinct equivalence classes.

Hence, for all \(g_1 \in [g]\), \(g_2 \in [g']\), we have
\[
\mathcal{M}_{I}(\mathcal{R})_{g_1} \star \mathcal{M}_{I}(\mathcal{R})_{g_2} = \mathbf{0},
\]
and so
\begin{equation}\label{3.6}
V_{[g]} \star V_{[g']} = \mathbf{0}.
\end{equation}

Next, we examine the first summand \(\mathfrak{I}_{\mathbf{0},[g]} \star \mathfrak{I}_{\mathbf{0},[g']}\) in equation~\eqref{3.5}. Suppose, for contradiction, that there exist \(g_1 \in [g]\) and \(g_2 \in [g']\) such that
\[
\left( \mathcal{M}_{I}(\mathcal{R})_{g_1} \star \mathcal{M}_{I}(\mathcal{R})_{-\beta^{-1} \circ \alpha(g_1)} \right) \star \left( \mathcal{M}_{I}(\mathcal{R})_{g_2} \star \mathcal{M}_{I}(\mathcal{R})_{-\beta^{-1} \circ \alpha(g_2)} \right) \neq 0.
\]

By BiHom-associativity and the same argument as in item (1), we obtain:
\[
\psi\left( \mathcal{M}_{I}(\mathcal{R})_{g_1} \right) \star \left( \mathcal{M}_{I}(\mathcal{R})_{-\beta^{-1} \circ \alpha(g_1)} \star \phi^{-1} \left( \mathcal{M}_{I}(\mathcal{R})_{g_2} \star \mathcal{M}_{I}(\mathcal{R})_{-\beta^{-1} \circ \alpha(g_2)} \right) \right) \neq 0.
\]

Applying BiHom-associativity again, we compute:
\begin{align*}
&\qquad \mathcal{M}_{I}(\mathcal{R})_{-\beta^{-1} \circ \alpha(g_1)} \star \phi^{-1} \left( \mathcal{M}_{I}(\mathcal{R})_{g_2} \star \mathcal{M}_{I}(\mathcal{R})_{-\beta^{-1} \circ \alpha(g_2)} \right) \neq 0 \\
&\Leftrightarrow \mathcal{M}_{I}(\mathcal{R})_{-\beta^{-1} \circ \alpha(g_1)} \star \left( \phi^{-1}\left( \mathcal{M}_{I}(\mathcal{R})_{g_2} \right) \star \phi^{-1}\left( \mathcal{M}_{I}(\mathcal{R})_{-\beta^{-1} \circ \alpha(g_2)} \right) \right) \neq 0 \\
&\Leftrightarrow \left( \psi^{-1}\left( \mathcal{M}_{I}(\mathcal{R})_{-\beta^{-1} \circ \alpha(g_1)} \right) \star \phi^{-1}\left( \mathcal{M}_{I}(\mathcal{R})_{g_2} \right) \right) \star \mathcal{M}_{I}(\mathcal{R})_{-\beta^{-1} \circ \alpha(g_2)} \neq 0.
\end{align*}

It follows that
\[
\psi^{-1}\left( \mathcal{M}_{I}(\mathcal{R})_{-\beta^{-1} \circ \alpha(g_1)} \right) \star \phi^{-1}\left( \mathcal{M}_{I}(\mathcal{R})_{g_2} \right) \subseteq \mathcal{M}_{I}(\mathcal{R})_{-\beta^{-1}(g_1)} \star \mathcal{M}_{I}(\mathcal{R})_{\beta^{-1}(g_2)} \neq 0,
\]
so that
\[
\mathcal{M}_{I}(\mathcal{R})_{-g_1} \star \mathcal{M}_{I}(\mathcal{R})_{g_2} \neq 0,
\]
which contradicts equation~\eqref{3.6}.

We therefore conclude that
\begin{equation}\label{3.7}
\mathfrak{I}_{\mathbf{0},[g]} \star \mathfrak{I}_{\mathbf{0},[g']} = 0.
\end{equation}

By similar reasoning, one also has:
\begin{equation}\label{3.8}
\mathfrak{I}_{\mathbf{0},[g]} \star V_{[g']} + V_{[g]} \star \mathfrak{I}_{\mathbf{0},[g']} = 0.
\end{equation}

Finally, by equation~\eqref{3.5} we get
$$
\left(\mathfrak{I}_{\mathbf{0},[g]} \oplus V_{[g]}\right) \star\left(\mathfrak{I}_{\mathbf{0},\left[g^{\prime}\right]} \oplus V_{\left[g^{\prime}\right]}\right)=\mathbf{0},
$$
and so $\mathfrak{I}_{[g]} \star \mathfrak{I}_{\left[g^{\prime}\right]}=\mathbf{0}$. 
\end{proof}

From Lemma~\ref{lemma_subideal}, together with Definitions~\ref{def_subalgebra} and~\ref{graded_algebra}, it follows immediately that for each \(g \in \Sigma\), the subspace \(\mathfrak{I}_{[g]}\) is a graded subalgebra of \(\left( \mathcal{M}_{I}(\mathcal{R}), \star, \psi, \phi \right)\).
We now proceed to show that, in fact, each \(\mathfrak{I}_{[g]}\) is a graded ideal of \(\left( \mathcal{M}_{I}(\mathcal{R}), \star, \psi, \phi \right)\).

\begin{theorem}\label{lemma_idealofgraded}
For each \(g \in \Sigma\), the graded \(\mathcal{R}\)-submodule
\[
\mathfrak{I}_{[g]} = \mathfrak{I}_{\mathbf{0},[g]} \oplus V_{[g]},
\]
associated with the equivalence class \([g]\), is a graded ideal of the BiHom-algebra \(\left( \mathcal{M}_{I}(\mathcal{R}), \star, \psi, \phi \right)\).
\end{theorem}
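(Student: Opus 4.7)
The plan is to deduce the ideal property from Lemma~\ref{lemma_subideal} and Lemma~\ref{B}, with the only truly new content being the interaction of $\mathfrak{I}_{[g]}$ with the zero component $\mathcal{M}_{I}(\mathcal{R})_{\mathbf{0}}$. Write the ambient decomposition
\[
\mathcal{M}_{I}(\mathcal{R}) = \mathcal{M}_{I}(\mathcal{R})_{\mathbf{0}} \oplus \bigoplus_{[h] \in \Sigma/\sim} V_{[h]},
\]
so that $\mathfrak{I}_{[g]} \star \mathcal{M}_{I}(\mathcal{R}) + \mathcal{M}_{I}(\mathcal{R}) \star \mathfrak{I}_{[g]}$ splits into contributions against $\mathcal{M}_{I}(\mathcal{R})_{\mathbf{0}}$, against $V_{[g]}$, and against $V_{[h]}$ for $[h]\neq[g]$. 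Since $V_{[g]} \subseteq \mathfrak{I}_{[g]}$, the $V_{[g]}$ piece is absorbed by $\mathfrak{I}_{[g]} \star \mathfrak{I}_{[g]} \subseteq \mathfrak{I}_{[g]}$ from Lemma~\ref{lemma_subideal}; since $V_{[h]} \subseteq \mathfrak{I}_{[h]}$ for $[h]\neq[g]$, the $V_{[h]}$ piece is annihilated by Lemma~\ref{B}. Stability of $\mathfrak{I}_{[g]}$ under $\psi$ and $\phi$ is also already furnished by Lemma~\ref{lemma_subideal}, and the grading decomposition $\mathfrak{I}_{[g]} = \bigoplus_{h \in G} \bigl(\mathfrak{I}_{[g]} \cap \mathcal{M}_{I}(\mathcal{R})_{h}\bigr)$ is immediate from the construction $\mathfrak{I}_{[g]} = \mathfrak{I}_{\mathbf{0},[g]} \oplus V_{[g]}$.

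The remaining task is to show $\mathfrak{I}_{[g]} \star \mathcal{M}_{I}(\mathcal{R})_{\mathbf{0}} \subseteq \mathfrak{I}_{[g]}$ and the symmetric statement. Splitting $\mathfrak{I}_{[g]} = \mathfrak{I}_{\mathbf{0},[g]} \oplus V_{[g]}$, the piece $V_{[g]} \star \mathcal{M}_{I}(\mathcal{R})_{\mathbf{0}}$ is handled directly: for $g' \in [g]$, relation~\eqref{2.3} gives $\mathcal{M}_{I}(\mathcal{R})_{g'} \star \mathcal{M}_{I}(\mathcal{R})_{\mathbf{0}} \subseteq \mathcal{M}_{I}(\mathcal{R})_{\alpha(g')}$, and Lemma~\ref{lemma_g} yields $\alpha(g') \in [g'] = [g]$, whence containment in $V_{[g]}$. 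For the piece $\mathfrak{I}_{\mathbf{0},[g]} \star \mathcal{M}_{I}(\mathcal{R})_{\mathbf{0}}$, I would take a typical generator $(\mathcal{M}_{I}(\mathcal{R})_{\beta(g')} \star \mathcal{M}_{I}(\mathcal{R})_{-\alpha(g')}) \star x_{\mathbf{0}}$ with $x_{\mathbf{0}} \in \mathcal{M}_{I}(\mathcal{R})_{\mathbf{0}}$, write $x_{\mathbf{0}} = \phi(\phi^{-1}(x_{\mathbf{0}}))$, and apply BiHom-associativity (exactly the trick used in the proof of Lemma~\ref{lemma_subideal}) to transform it into $\psi(\mathcal{M}_{I}(\mathcal{R})_{\beta(g')}) \star (\mathcal{M}_{I}(\mathcal{R})_{-\alpha(g')} \star \phi^{-1}(x_{\mathbf{0}}))$. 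Since $\phi^{-1}$ preserves degree $\mathbf{0}$, the inner factor lies in $\mathcal{M}_{I}(\mathcal{R})_{-\alpha^{2}(g')}$, and the whole expression acquires the shape $\mathcal{M}_{I}(\mathcal{R})_{\beta(\alpha(g'))} \star \mathcal{M}_{I}(\mathcal{R})_{-\alpha(\alpha(g'))}$, which sits inside $\mathfrak{I}_{\mathbf{0},[g]}$ because $\alpha(g') \in [g]$ by Lemma~\ref{lemma_g}. The left-multiplication statement $\mathcal{M}_{I}(\mathcal{R})_{\mathbf{0}} \star \mathfrak{I}_{[g]} \subseteq \mathfrak{I}_{[g]}$ is handled symmetrically, rewriting the degree-zero factor as $\psi(\psi^{-1}(\cdot))$ and applying BiHom-associativity in the opposite direction.

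The main obstacle is precisely this $\mathfrak{I}_{\mathbf{0},[g]} \star \mathcal{M}_{I}(\mathcal{R})_{\mathbf{0}}$ (and its mirror) computation: one must re-bracket via BiHom-associativity in such a way that the resulting product reassembles into the defining shape $\mathcal{M}_{I}(\mathcal{R})_{\beta(h)} \star \mathcal{M}_{I}(\mathcal{R})_{-\alpha(h)}$ of a generator of $\mathfrak{I}_{\mathbf{0},[g]}$, with the new index $h = \alpha(g')$. Once this shape is recognized, the $\alpha$- and $\beta$-invariance of equivalence classes supplied by Lemma~\ref{lemma_g} closes the argument, and combining all the pieces produces $\mathfrak{I}_{[g]} \star \mathcal{M}_{I}(\mathcal{R}) + \mathcal{M}_{I}(\mathcal{R}) \star \mathfrak{I}_{[g]} \subseteq \mathfrak{I}_{[g]}$, as required.
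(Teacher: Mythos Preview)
Your proposal is correct and follows essentially the same approach as the paper: decompose $\mathcal{M}_{I}(\mathcal{R})$ into the zero component, the $[g]$-piece, and the other classes, then handle the zero component via the BiHom-associativity trick $x_{\mathbf{0}} = \phi(\phi^{-1}(x_{\mathbf{0}}))$ to recognise the result as a generator of $\mathfrak{I}_{\mathbf{0},[g]}$ indexed by $\alpha(g')$. The only cosmetic difference is that the paper writes the generators of $\mathfrak{I}_{\mathbf{0},[g]}$ in the equivalent form $\mathcal{M}_{I}(\mathcal{R})_{g'} \star \mathcal{M}_{I}(\mathcal{R})_{-\beta^{-1}\circ\alpha(g')}$, but your indexing matches the stated definition and the computation is the same.
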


\begin{proof}
 For each $g^{\prime} \in[g]$ we have $\mathcal{M}_{I}(\mathcal{R})_{g^{\prime}} \star \mathcal{M}_{I}(\mathcal{R})_{\mathbf{0}} \subset \mathcal{M}_{I}(\mathcal{R})_{\alpha\left(g^{\prime}\right)} \subset V_{[g]}$ and so
\begin{align}\label{3.7}
    V_{[g]} \star \mathcal{M}_{I}(\mathcal{R})_{\mathbf{0}} \subset V_{[g]} 
\end{align}
By BiHom-associativity, we also have
\begin{align*}
&\quad\left(\mathcal{M}_{I}(\mathcal{R})_{g^{\prime}} \star \mathcal{M}_{I}(\mathcal{R})_{-\beta^{-1}\circ\alpha(g^{\prime})}\right) \star \mathcal{M}_{I}(\mathcal{R})_{\mathbf{0}}\\
&= \left(\mathcal{M}_{I}(\mathcal{R})_{g^{\prime}} \star \mathcal{M}_{I}(\mathcal{R})_{-\beta^{-1}\circ\alpha(g^{\prime})}\right) \star \phi\left(\phi^{-1}\left(\mathcal{M}_{I}(\mathcal{R})_{\mathbf{0}}\right)\right)\\
&= \psi\left(\mathcal{M}_{I}(\mathcal{R})_{g^{\prime}}\right) \star\left(\mathcal{M}_{I}(\mathcal{R})_{-\beta^{-1}\circ\alpha(g^{\prime})} \star \phi^{-1}\left(\mathcal{M}_{I}(\mathcal{R})_{\mathbf{0}}\right)\right)\\
&\subset \mathcal{M}_{I}(\mathcal{R})_{\alpha\left(g^{\prime}\right)} \star \mathcal{M}_{I}(\mathcal{R})_{-\beta^{-1}\circ\alpha^2\left(g^{\prime}\right)} \subset \mathfrak{I}_{\mathbf{0},[g]},
\end{align*}
and then
\begin{align}\label{3.8}
    \mathfrak{I}_{\mathbf{0},[g]} \star \mathcal{M}_{I}(\mathcal{R})_{\mathbf{0}} \subset \mathfrak{I}_{\mathbf{0},[g]}
\end{align}
From the equations~\eqref{3.7} and~\eqref{3.8}, we can obtain
$$
\mathfrak{I}_{[g]} \star \mathcal{M}_{I}(\mathcal{R})_{\mathbf{0}} \subset \mathfrak{I}_{[g]}
$$
Taking into account the above observation and Lemma~\ref{lemma_subideal}, we have
$$
\mathfrak{I}_{[g]} \star \mathcal{M}_{I}(\mathcal{R})=\mathfrak{I}_{[g]} \star\left(\mathcal{M}_{I}(\mathcal{R})_{\mathbf{0}} \oplus\left(\bigoplus_{g^{\prime} \in[g]} \mathcal{M}_{I}(\mathcal{R})_{g^{\prime}}\right) \oplus\left(\bigoplus_{g^{\prime \prime} \notin[g]} \mathcal{M}_{I}(\mathcal{R})_{g^{\prime \prime}}\right)\right) \subset \mathfrak{I}_{[g]}
$$
In a similar way we get $\mathcal{M}_{I}(\mathcal{R}) \star \mathfrak{I}_{[g]} \subset \mathfrak{I}_{[g]}$. Thus we conclude $\mathfrak{I}_{[g]}$ is a graded ideal of $\mathcal{M}_{I}(\mathcal{R})$.
\end{proof}

\begin{theorem}\label{B'}
If the BiHom-algebra \(\left( \mathcal{M}_{I}(\mathcal{R}), \star, \psi, \phi \right)\) is graded simple, then the following hold:

\begin{itemize}
    \item For every pair \(g, g' \in \Sigma\), there exists a connection from \(g\) to \(g'\).
    
    \item The degree-zero component \(\mathcal{M}_{I}(\mathcal{R})_{\mathbf{0}}\) can be written as
    \[
    \mathcal{M}_{I}(\mathcal{R})_{\mathbf{0}} 
    = \sum_{g \in \Sigma} \left( \mathcal{M}_{I}(\mathcal{R})_g \star \mathcal{M}_{I}(\mathcal{R})_{-\beta^{-1} \circ \alpha(g)} \right).
    \]
\end{itemize}
\end{theorem}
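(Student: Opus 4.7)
The plan is to leverage the ideal construction from Theorem~\ref{lemma_idealofgraded} together with the orthogonality from Lemma~\ref{B}: graded simplicity cannot coexist with two disjoint equivalence classes, because each would produce a nonzero graded ideal, and Lemma~\ref{B} would then force the total product $\mathcal{M}_{I}(\mathcal{R}) \star \mathcal{M}_{I}(\mathcal{R})$ to vanish. Once a single equivalence class is forced, assertion (2) comes out of comparing the degree-zero parts of $\mathfrak{I}_{[g]}$ and $\mathcal{M}_I(\mathcal{R})$, modulo a small reindexing.

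For assertion (1), I would fix $g \in \Sigma$ and first note that $\mathfrak{I}_{[g]}$ is a nonzero graded ideal of $(\mathcal{M}_{I}(\mathcal{R}), \star, \psi, \phi)$: it is a graded ideal by Theorem~\ref{lemma_idealofgraded}, and it is nonzero because $\mathcal{M}_{I}(\mathcal{R})_{g} \subseteq V_{[g]} \subseteq \mathfrak{I}_{[g]}$ and $\mathcal{M}_{I}(\mathcal{R})_{g} \neq \mathbf{0}$ by definition of $\Sigma$. Graded simplicity forces $\mathfrak{I}_{[g]} = \mathcal{M}_{I}(\mathcal{R})$. If there were $g' \in \Sigma$ with $g' \notin [g]$, the same argument gives $\mathfrak{I}_{[g']} = \mathcal{M}_{I}(\mathcal{R})$, and Lemma~\ref{B} yields
\[
\mathcal{M}_{I}(\mathcal{R}) \star \mathcal{M}_{I}(\mathcal{R}) = \mathfrak{I}_{[g]} \star \mathfrak{I}_{[g']} = \mathbf{0},
\]
contradicting the nontriviality clause in the definition of graded simplicity. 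Hence $[g] = \Sigma$, which is the claim.

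For assertion (2), from $\mathfrak{I}_{[g]} = \mathcal{M}_{I}(\mathcal{R})$ and the graded decomposition $\mathfrak{I}_{[g]} = \mathfrak{I}_{\mathbf{0},[g]} \oplus V_{[g]}$, comparing the degree-$\mathbf{0}$ summands of both sides yields $\mathfrak{I}_{\mathbf{0},[g]} = \mathcal{M}_{I}(\mathcal{R})_{\mathbf{0}}$. By definition
\[
\mathfrak{I}_{\mathbf{0},[g]} = \operatorname{Span}_{\mathcal{R}}\bigl\{\mathcal{M}_{I}(\mathcal{R})_{\beta(g')} \star \mathcal{M}_{I}(\mathcal{R})_{-\alpha(g')} \mid g' \in [g]\bigr\},
\]
so I reindex by setting $h := \beta(g')$. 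Since $\phi$ is an automorphism that restricts to a bijection on each homogeneous component, $\beta$ restricts to a bijection of $\Sigma$, and because $[g] = \Sigma$ we have $\beta([g]) = \Sigma$. Using the commutativity of $\alpha$ and $\beta$ we get $-\alpha(g') = -\alpha \circ \beta^{-1}(h) = -\beta^{-1} \circ \alpha(h)$. Substituting yields exactly
\[
\mathcal{M}_{I}(\mathcal{R})_{\mathbf{0}} = \sum_{h \in \Sigma} \bigl(\mathcal{M}_{I}(\mathcal{R})_{h} \star \mathcal{M}_{I}(\mathcal{R})_{-\beta^{-1} \circ \alpha(h)}\bigr).
\]

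The only subtle step is the bookkeeping in the reindexing for (2): one must know that $\beta(\Sigma) = \Sigma$, which follows from $\phi$ being a bijective graded automorphism (any homogeneous piece of a preimage sits in a single degree, so the inclusion $\phi(\mathcal{M}_{I}(\mathcal{R})_{g}) \subseteq \mathcal{M}_{I}(\mathcal{R})_{\beta(g)}$ is forced to be equality). Beyond that, the argument is a clean contradiction plus a change of variable, and no serious obstacle is expected.
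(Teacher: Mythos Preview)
Your proof is correct and follows the paper's approach: graded simplicity forces the nonzero graded ideal $\mathfrak{I}_{[g]}$ to equal $\mathcal{M}_I(\mathcal{R})$, whence $[g]=\Sigma$ and $\mathfrak{I}_{\mathbf{0},[g]}=\mathcal{M}_I(\mathcal{R})_{\mathbf{0}}$. The paper reaches $[g]=\Sigma$ directly by comparing homogeneous components (so your detour through Lemma~\ref{B} is valid but unnecessary), and it leaves both the degree-zero identity and your reindexing $h=\beta(g')$ implicit; the substance is the same.
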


\begin{proof}
The graded simplicity of the BiHom-algebra \(\left( \mathcal{M}_{I}(\mathcal{R}), \star, \psi, \phi \right)\) implies that for any \(g \in \Sigma\), the associated graded ideal satisfies
\[
\mathfrak{I}_{[g]} = \mathcal{M}_{I}(\mathcal{R}).
\]
In particular, this means that the sum of all homogeneous components indexed by \([g]\) must exhaust the entire grading support \(\Sigma\). Hence, we conclude that \([g] = \Sigma\); that is, every pair \(g, g' \in \Sigma\) lies in the same equivalence class under the connection relation.
\end{proof}

\begin{theorem}
We have the decomposition
\[
\mathcal{M}_{I}(\mathcal{R}) = U + \sum_{[g] \in \Sigma / \sim} \mathfrak{I}_{[g]},
\]
where \(U\) is an \(\mathcal{R}\)-submodule of the degree-zero component \(\mathcal{M}_{I}(\mathcal{R})_{\mathbf{0}}\), and each \(\mathfrak{I}_{[g]}\) is one of the graded ideals of the BiHom-algebra \(\left( \mathcal{M}_{I}(\mathcal{R}), \star, \psi, \phi \right)\) as described in Theorem~\ref{lemma_idealofgraded}.
\end{theorem}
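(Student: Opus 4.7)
The plan is to start from the given $G$-grading of $\mathcal{M}_I(\mathcal{R})$ and simply repackage the homogeneous components according to the equivalence classes produced by Theorem~\ref{equivalence_relation}. First I would isolate the degree-zero component and split the rest along $\Sigma$:
\[
\mathcal{M}_I(\mathcal{R}) \;=\; \mathcal{M}_I(\mathcal{R})_{\mathbf{0}} \;\oplus\; \Bigl(\bigoplus_{g \in \Sigma} \mathcal{M}_I(\mathcal{R})_g\Bigr).
\]
Since $\sim$ partitions $\Sigma$ as $\Sigma = \bigsqcup_{[g] \in \Sigma/\sim}[g]$, the nonzero-degree part immediately reorganises as
\[
\bigoplus_{g \in \Sigma} \mathcal{M}_I(\mathcal{R})_g \;=\; \bigoplus_{[g] \in \Sigma/\sim} V_{[g]}.
\]

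Next I would deal with the degree-zero slice. By construction $\mathfrak{I}_{\mathbf{0},[g]} \subseteq \mathcal{M}_I(\mathcal{R})_{\mathbf{0}}$ for every class $[g]$, so the submodule $W := \sum_{[g] \in \Sigma/\sim} \mathfrak{I}_{\mathbf{0},[g]}$ sits inside $\mathcal{M}_I(\mathcal{R})_{\mathbf{0}}$. I would then pick any $\mathcal{R}$-submodule $U \subseteq \mathcal{M}_I(\mathcal{R})_{\mathbf{0}}$ satisfying $\mathcal{M}_I(\mathcal{R})_{\mathbf{0}} = U + W$; over an arbitrary commutative ring one can even take $U = \mathcal{M}_I(\mathcal{R})_{\mathbf{0}}$, whereas over a field a genuine complement of $W$ is available.

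Combining the two pieces yields
\[
\mathcal{M}_I(\mathcal{R}) \;=\; U + W \;+\; \bigoplus_{[g] \in \Sigma/\sim} V_{[g]} \;=\; U + \sum_{[g] \in \Sigma/\sim}\bigl(\mathfrak{I}_{\mathbf{0},[g]} \oplus V_{[g]}\bigr) \;=\; U + \sum_{[g] \in \Sigma/\sim} \mathfrak{I}_{[g]},
\]
and Theorem~\ref{lemma_idealofgraded} guarantees that each summand $\mathfrak{I}_{[g]}$ is a graded ideal of $(\mathcal{M}_I(\mathcal{R}),\star,\psi,\phi)$, which is all that is asserted.

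There is no real obstacle here: the argument is essentially bookkeeping that regroups the grading along the classes of $\sim$, and everything substantive has already been done in Theorem~\ref{equivalence_relation} (to make $\sim$ an equivalence relation, so the classes do partition $\Sigma$) and Theorem~\ref{lemma_idealofgraded} (to make each $\mathfrak{I}_{[g]}$ a graded ideal). The one subtlety worth flagging is that the sum $U + \sum_{[g]} \mathfrak{I}_{[g]}$ is \emph{not} claimed to be direct; directness of the decomposition requires additional hypotheses such as $\Sigma$-multiplicativity or triviality of the centre, which are precisely the refinements pursued in the subsequent section.
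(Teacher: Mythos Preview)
Your proposal is correct and follows essentially the same approach as the paper: split off the degree-zero component, regroup the nonzero homogeneous pieces along the equivalence classes of $\sim$ to get $\bigoplus_{[g]} V_{[g]}$, and choose an $\mathcal{R}$-submodule $U$ of $\mathcal{M}_I(\mathcal{R})_{\mathbf{0}}$ covering the remainder after $\sum_{[g]}\mathfrak{I}_{\mathbf{0},[g]}$. Your remark that one may simply take $U=\mathcal{M}_I(\mathcal{R})_{\mathbf{0}}$ over a general ring, and that directness is not being asserted here, is a helpful clarification that the paper leaves implicit.
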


\begin{proof}
By Theorem~\ref{lemma_idealofgraded}, each \(\mathfrak{I}_{[g]}\) is a graded ideal of \((\mathcal{M}_{I}(\mathcal{R}), \star, \psi, \phi)\). Consider an \(\mathcal{R}\)-submodule \(U \subseteq \mathcal{M}_{I}(\mathcal{R})_{\mathbf{0}}\) such that
\[
\mathcal{M}_{I}(\mathcal{R})_{\mathbf{0}} = U + \operatorname{Span}_{\mathcal{R}} 
\left\{ \mathcal{M}_{I}(\mathcal{R})_g \star \mathcal{M}_{I}(\mathcal{R})_{-\beta^{-1} \circ \alpha(g)} 
\mid g \in \Sigma \right\}.
\]

Now, applying Lemma~\ref{B}, and using the fact that
\[
\mathcal{M}_{I}(\mathcal{R}) = \mathcal{M}_{I}(\mathcal{R})_{\mathbf{0}} 
\oplus 
\left( \bigoplus_{g \in \Sigma} \mathcal{M}_{I}(\mathcal{R})_g \right),
\]
we obtain the desired decomposition:
\[
\mathcal{M}_{I}(\mathcal{R}) = U + \sum_{[g] \in \Sigma / \sim} \mathfrak{I}_{[g]}.
\]
\end{proof}

Let us denote by 
\[
\mathbb{Z}\left(\mathcal{M}_{I}(\mathcal{R})\right) 
:= \left\{ v \in \mathcal{M}_{I}(\mathcal{R}) \mid v \star \mathcal{M}_{I}(\mathcal{R}) 
+ \mathcal{M}_{I}(\mathcal{R}) \star v = \mathbf{0} \right\}
\]
the centre of \(\mathcal{M}_{I}(\mathcal{R})\).

\begin{corollary}\label{corollary_theorem}
If \(\mathbb{Z}\left(\mathcal{M}_{I}(\mathcal{R})\right) = \mathbf{0}\), and if
\[
\mathcal{M}_{I}(\mathcal{R})_{\mathbf{0}} 
= \sum_{g \in \Sigma} \left( \mathcal{M}_{I}(\mathcal{R})_g 
\star 
\mathcal{M}_{I}(\mathcal{R})_{-\beta^{-1} \circ \alpha(g)} \right),
\]
then the BiHom-algebra \(\left( \mathcal{M}_{I}(\mathcal{R}), \star, \psi, \phi \right)\) decomposes as a direct sum of the graded ideals given in Theorem~\ref{lemma_idealofgraded}, that is,
\[
\mathcal{M}_{I}(\mathcal{R}) = \bigoplus_{[g] \in \Sigma / \sim} \mathfrak{I}_{[g]}.
\]
\end{corollary}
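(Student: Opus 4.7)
The plan is to first use the hypothesis on the zero component to upgrade the previous theorem's decomposition, and then exploit the triviality of the centre together with Lemma~\ref{B} to promote the resulting sum to a direct sum. Since the hypothesis states
\[
\mathcal{M}_{I}(\mathcal{R})_{\mathbf{0}} = \sum_{g \in \Sigma} \bigl(\mathcal{M}_{I}(\mathcal{R})_{g} \star \mathcal{M}_{I}(\mathcal{R})_{-\beta^{-1}\circ\alpha(g)}\bigr),
\]
and since each summand on the right already sits inside some $\mathfrak{I}_{\mathbf{0},[g]} \subseteq \mathfrak{I}_{[g]}$, one may take $U = \mathbf{0}$ in the previous theorem. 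This yields the first half of the conclusion:
\[
\mathcal{M}_{I}(\mathcal{R}) = \sum_{[g] \in \Sigma/\sim} \mathfrak{I}_{[g]}.
\]

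Next I would prove the sum is direct. Fix an equivalence class $[g]$ and take $x \in \mathfrak{I}_{[g]} \cap \sum_{[g'] \neq [g]} \mathfrak{I}_{[g']}$; the goal is to show $x = \mathbf{0}$. Pick any $y \in \mathcal{M}_{I}(\mathcal{R})$ and, using the sum decomposition just established, write $y = \sum_{[h]} y_{[h]}$ with $y_{[h]} \in \mathfrak{I}_{[h]}$. Since $x \in \mathfrak{I}_{[g]}$, Lemma~\ref{B} gives $x \star y_{[h]} = \mathbf{0}$ for every $[h] \neq [g]$, hence $x \star y = x \star y_{[g]}$. On the other hand, expressing $x = \sum_{[g'] \neq [g]} x_{[g']}$ with $x_{[g']} \in \mathfrak{I}_{[g']}$ and applying Lemma~\ref{B} again, each $x_{[g']} \star y_{[g]} = \mathbf{0}$, so $x \star y_{[g]} = \mathbf{0}$. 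The same argument, run on the other side, shows $y \star x = \mathbf{0}$. Thus $x \in \mathbb{Z}(\mathcal{M}_{I}(\mathcal{R})) = \mathbf{0}$, which forces $x = \mathbf{0}$.

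The one delicate point — really the only step where some care is warranted — is that the decomposition $y = \sum_{[h]} y_{[h]}$ need not be unique a priori (uniqueness is precisely what we are trying to establish). The argument finesses this by observing that $x \star y = \mathbf{0}$ holds for \emph{any} admissible choice of such a representation, since Lemma~\ref{B} kills every cross term regardless of how the decomposition is taken; non-uniqueness therefore causes no difficulty. The deeper mechanism is transparent: Lemma~\ref{B} supplies multiplicative orthogonality between the pieces $\mathfrak{I}_{[g]}$, and the hypothesis $\mathbb{Z}(\mathcal{M}_{I}(\mathcal{R})) = \mathbf{0}$ converts this orthogonality into the directness of the sum.
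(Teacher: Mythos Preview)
Your proof is correct and follows essentially the same approach as the paper: obtain the sum $\mathcal{M}_{I}(\mathcal{R}) = \sum_{[g]} \mathfrak{I}_{[g]}$ from the hypothesis on $\mathcal{M}_{I}(\mathcal{R})_{\mathbf{0}}$, then use the orthogonality $\mathfrak{I}_{[g]} \star \mathfrak{I}_{[g']} = \mathbf{0}$ from Lemma~\ref{B} together with $\mathbb{Z}(\mathcal{M}_{I}(\mathcal{R})) = \mathbf{0}$ to force directness. The paper compresses the directness step into a single sentence, whereas you have spelled out explicitly how an element of the intersection lands in the centre; your added remark about non-uniqueness of the representation $y = \sum_{[h]} y_{[h]}$ is a nice clarification of a point the paper leaves implicit.
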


\begin{proof}
From the identity
\[
\mathcal{M}_{I}(\mathcal{R})_{\mathbf{0}} 
= \sum_{g \in \Sigma} \left( 
\mathcal{M}_{I}(\mathcal{R})_g 
\star 
\mathcal{M}_{I}(\mathcal{R})_{-\beta^{-1} \circ \alpha(g)} \right),
\]
it follows that
\[
\mathcal{M}_{I}(\mathcal{R}) 
= \sum_{[g] \in \Sigma / \sim} \mathfrak{I}_{[g]}.
\]

To show that the sum is direct, we recall from Lemma~\ref{lemma_subideal} that
\[
\mathfrak{I}_{[g]} \star \mathfrak{I}_{[g']} = \mathbf{0} 
\quad \text{whenever } [g] \neq [g'].
\]
In addition, since the centre 
\(\mathbb{Z}\left( \mathcal{M}_{I}(\mathcal{R}) \right) = \mathbf{0}\), 
this implies that the intersection between distinct graded ideals is trivial. 

Hence, the decomposition is direct:
\[
\mathcal{M}_{I}(\mathcal{R}) = \bigoplus_{[g] \in \Sigma / \sim} \mathfrak{I}_{[g]}.
\]
\end{proof}

\section{Graded simple case}\label{sect5}

In this section, we consider an arbitrary graded BiHom-algebra over a general base field. Our objective is to investigate the conditions under which such an algebra admits a decomposition as the direct sum of its simple graded ideals.

From this point onwards, we let \((\mathcal{M}, \star, \psi, \phi)\) denote a BiHom-algebra of arbitrary (possibly infinite) dimension over an arbitrary base field \(\mathbb{K}\), graded by the regular abelian BiHom-group  \( \left( G, +_{(\alpha, \beta)}, \alpha, \beta, 0 \right)\),
where the operation \(+_{(\alpha,\beta)}\) is defined by \(g +_{(\alpha,\beta)} g' := \alpha(g) + \beta(g')\), and \(\alpha, \beta\) are two commuting automorphisms of the underlying abelian group \((G, +, 0)\).

Accordingly, we write the decomposition of \(\mathcal{M}\) as
\[
\mathcal{M} = \bigoplus_{g \in G} \mathcal{M}_g = \mathcal{M}_0 \oplus \left( \bigoplus_{g \in \Sigma} \mathcal{M}_g \right),
\]
where the multiplication satisfies
\begin{align}\label{4.1}
\mathcal{M}_g \star \mathcal{M}_{g'} \subseteq \mathcal{M}_{g +_{(\alpha, \beta)} g'} 
= \mathcal{M}_{\alpha(g) + \beta(g')}, \quad \text{for all } g, g' \in G,
\end{align}
and where the support of the grading is given by
\[
\Sigma := \left\{ g \in G \setminus \{0\} \mid \mathcal{M}_g \neq 0 \right\}.
\]

Furthermore, we assume that the twisting maps \(\psi, \phi : \mathcal{M} \to \mathcal{M}\) (see Definition~\ref{algebra_based_module}) are commuting graded automorphisms of \((\mathcal{M}, \star)\), compatible with \(\alpha\) and \(\beta\). That is,
\begin{align}
\psi\left( \mathcal{M}_g \right) &\subseteq \mathcal{M}_{\alpha(g)}, \label{4.2} \\
\phi\left( \mathcal{M}_g \right) &\subseteq \mathcal{M}_{\beta(g)}, \label{4.3}
\end{align}
for all \(g \in G\).

Observe that if \(\alpha = \beta\) (respectively, \(\alpha = \beta = \operatorname{Id}_G\)) and \(\psi = \phi\) (respectively, \(\psi = \phi = \operatorname{Id}_{\mathcal{M}}\)), then the structure reduces to a Hom-algebra (respectively, an associative algebra) graded by an abelian Hom-group (respectively, by a classical abelian group). Therefore, the results presented in this section extend those obtained in~\cite{C2}.

We recall (see Sect.~\ref{sect2}) that a graded ideal of \(\mathcal{M}\) is a linear subspace \(\mathfrak{I} \subseteq \mathcal{M}\) satisfying the following conditions:
\begin{itemize}
    \item \(\mathfrak{I} \star \mathcal{M} + \mathcal{M} \star \mathfrak{I} \subseteq \mathfrak{I}\),
    \item \(\psi(\mathfrak{I}) \subseteq \mathfrak{I}\) and \(\phi(\mathfrak{I}) \subseteq \mathfrak{I}\),
    \item \(\mathfrak{I}\) admits a grading: \(\mathfrak{I} = \bigoplus_{g \in G} \mathfrak{I}_g\), where \(\mathfrak{I}_g = \mathfrak{I} \cap \mathcal{M}_g\).
\end{itemize}
We also recall that the BiHom-algebra \(\mathcal{M}\) is said to be \textit{graded simple} if \(\mathcal{M} \star \mathcal{M} \neq \mathbf{0}\), and the only graded ideals of \(\mathcal{M}\) are \(\{0\}\) and \(\mathcal{M}\) itself.

As usual, we denote by
\[
\mathbb{Z}(\mathcal{M}) := \left\{ v \in \mathcal{M} \mid v \star \mathcal{M} + \mathcal{M} \star v = \mathbf{0} \right\}
\]
the centre of the BiHom-algebra \((\mathcal{M}, \star, \psi, \phi)\).

\begin{lemma}\label{lemma_center}
Suppose that
\[
\mathcal{M}_{\mathbf{0}} 
= \sum_{g \in \Sigma} 
\mathcal{M}_g \star \mathcal{M}_{-\beta^{-1} \circ \alpha(g)}.
\]
If \(\mathfrak{I}\) is a graded ideal of the BiHom-algebra \((\mathcal{M}, \star, \psi, \phi)\) such that \(\mathfrak{I} \subseteq \mathcal{M}_{\mathbf{0}}\), then \(\mathfrak{I} \subseteq \mathbb{Z}(\mathcal{M})\).
\end{lemma}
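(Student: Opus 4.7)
The plan is to fix an arbitrary $v \in \mathfrak{I}$ and verify directly that $v \star \mathcal{M} + \mathcal{M} \star v = \mathbf{0}$. Using the splitting $\mathcal{M} = \mathcal{M}_{\mathbf{0}} \oplus \bigoplus_{g \in \Sigma} \mathcal{M}_g$, I would break the task into the nonzero-degree pieces and the degree-zero piece, treating the two cases by very different means.

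The nonzero-degree pieces are immediate from the graded-ideal property. For $x \in \mathcal{M}_g$ with $g \in \Sigma$, equation~\eqref{4.1} places $v \star x$ in $\mathcal{M}_{\alpha(\mathbf{0})+\beta(g)} = \mathcal{M}_{\beta(g)}$, while the ideal condition forces $v \star x \in \mathfrak{I} \subseteq \mathcal{M}_{\mathbf{0}}$. Since $\beta$ is a group automorphism we have $\beta(g) \neq \mathbf{0}$, so directness of the grading gives $v \star x = \mathbf{0}$; the symmetric argument handles $x \star v$.

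The substantive part is the degree-zero piece. Using the hypothesis $\mathcal{M}_{\mathbf{0}} = \sum_{g \in \Sigma} \mathcal{M}_g \star \mathcal{M}_{-\beta^{-1} \circ \alpha(g)}$, it suffices to evaluate $v \star (x \star y)$ and $(x \star y) \star v$ on homogeneous elements $x \in \mathcal{M}_g$, $y \in \mathcal{M}_{-\beta^{-1} \circ \alpha(g)}$ with $g \in \Sigma$. I would rewrite the first via BiHom-associativity as $v \star (x \star y) = \bigl(\psi^{-1}(v) \star x\bigr) \star \phi(y)$, which reduces the question to proving $\psi^{-1}(v) \star x = \mathbf{0}$. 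This is the main obstacle I anticipate: $\psi^{-1}(v)$ is not guaranteed to lie in $\mathfrak{I}$, since the hypothesis only gives $\psi(\mathfrak{I}) \subseteq \mathfrak{I}$, and in the possibly infinite-dimensional setting of this section this inclusion may well be strict, so the annihilation from the previous paragraph does not apply verbatim.

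To get past this, my plan is to transport the annihilation identity through $\psi^{-1}$ rather than transport the element. Applying $\psi^{-1}$ to the already-established $v \star x = \mathbf{0}$ and using that $\psi$ is an algebra homomorphism yields $\psi^{-1}(v) \star \psi^{-1}(x) = \mathbf{0}$; since $\psi$ restricts to a bijection $\mathcal{M}_g \to \mathcal{M}_{\alpha(g)}$ and $\alpha$ permutes $\Sigma$, letting $g$ range over $\Sigma$ shows $\psi^{-1}(v) \star \mathcal{M}_h = \mathbf{0}$ for every $h \in \Sigma$. Substituting back gives $v \star (x \star y) = \mathbf{0}$, hence $v \star \mathcal{M}_{\mathbf{0}} = \mathbf{0}$. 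The argument for $\mathcal{M}_{\mathbf{0}} \star v = \mathbf{0}$ is entirely parallel, using the mirror form $(x \star y) \star v = \psi(x) \star \bigl(y \star \phi^{-1}(v)\bigr)$ together with the $\phi^{-1}$-transported version of $y \star v = \mathbf{0}$. Combining all cases yields $v \in \mathbb{Z}(\mathcal{M})$, and since $v \in \mathfrak{I}$ was arbitrary, $\mathfrak{I} \subseteq \mathbb{Z}(\mathcal{M})$.
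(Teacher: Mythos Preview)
Your proof is correct and follows essentially the same route as the paper: first kill the nonzero-degree components via the ideal-plus-grading contradiction, then handle $\mathcal{M}_{\mathbf{0}}$ by rewriting $v \star (x \star y)$ through BiHom-associativity. You are in fact more careful than the paper at the key step, explicitly justifying $\psi^{-1}(v) \star \mathcal{M}_h = \mathbf{0}$ by transporting the annihilation through the automorphism $\psi^{-1}$ and using that $\alpha$ permutes $\Sigma$, whereas the paper simply writes $(\psi^{-1}(\mathfrak{I}) \star \mathcal{M}_g) \star \phi(\mathcal{M}_{-\beta^{-1}\circ\alpha(g)}) = \mathbf{0}$ without comment.
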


\begin{proof}
Since \(\mathfrak{I}\) is a graded ideal of the BiHom-algebra \((\mathcal{M}, \star, \psi, \phi)\) with \(\mathfrak{I} \subseteq \mathcal{M}_{\mathbf{0}}\), we have
\[
\mathfrak{I} \star \left( \bigoplus_{g \in \Sigma} \mathcal{M}_g \right) \subseteq J \subseteq \mathcal{M}_{\mathbf{0}}.
\]
On the other hand, using equation~\eqref{4.1}, it follows that
\[
\mathfrak{I} \star \left( \bigoplus_{g \in \Sigma} \mathcal{M}_g \right)
\subseteq 
\mathcal{M}_{\mathbf{0}} \star \left( \bigoplus_{g \in \Sigma} \mathcal{M}_g \right)
\subseteq 
\bigoplus_{g \in \Sigma} \mathcal{M}_{\beta(g)} 
\subseteq 
\bigoplus_{g \in \Sigma} \mathcal{M}_g.
\]
Thus,
\[
\mathfrak{I} \star \left( \bigoplus_{g \in \Sigma} \mathcal{M}_g \right) 
\subseteq 
\left( \bigoplus_{g \in \Sigma} \mathcal{M}_g \right) \cap \mathcal{M}_{\mathbf{0}} = \mathbf{0},
\]
and hence,
\[
\mathfrak{I} \star \left( \bigoplus_{g \in \Sigma} \mathcal{M}_g \right) = \mathbf{0}.
\]
Similarly, one obtains:
\[
\left( \bigoplus_{g \in \Sigma} \mathcal{M}_g \right) \star \mathfrak{I} = \mathbf{0}.
\]
Therefore,
\begin{equation}\label{4.4}
\mathfrak{I} \star \left( \bigoplus_{g \in \Sigma} \mathcal{M}_g \right) 
+ 
\left( \bigoplus_{g \in \Sigma} \mathcal{M}_g \right) \star \mathfrak{I} 
= \mathbf{0}.
\end{equation}

Now, using BiHom-associativity and equation~\eqref{4.4}, for each \(g \in \Sigma\), we compute:
\begin{align*}
\mathfrak{I} \star 
\left( 
\mathcal{M}_g \star \mathcal{M}_{-\beta^{-1} \circ \alpha(g)} 
\right) 
&= \psi \left( \psi^{-1}(\mathfrak{I}) \right) 
\star 
\left( \mathcal{M}_g \star \mathcal{M}_{-\beta^{-1} \circ \alpha(g)} \right) \\
&= \left( \psi^{-1}(\mathfrak{I}) \star \mathcal{M}_g \right) 
\star 
\phi \left( \mathcal{M}_{-\beta^{-1} \circ \alpha(g)} \right) \\
&= \mathbf{0}.
\end{align*}

Similarly,
\[
\left( \mathcal{M}_g \star \mathcal{M}_{-\beta^{-1} \circ \alpha(g)} \right) \star \mathfrak{I} = \mathbf{0}.
\]

Since
\[
\mathcal{M}_{\mathbf{0}} 
= \sum_{g \in \Sigma} \left( \mathcal{M}_g \star \mathcal{M}_{-\beta^{-1} \circ \alpha(g)} \right),
\]
we conclude that
\begin{equation}\label{4.5}
\mathfrak{I} \star \mathcal{M}_{\mathbf{0}} 
+ 
\mathcal{M}_{\mathbf{0}} \star \mathfrak{I} 
= \mathbf{0}.
\end{equation}

Combining equations~\eqref{4.4} and~\eqref{4.5}, we obtain:
\[
\mathfrak{I} \star \mathcal{M} + \mathcal{M} \star \mathfrak{I} = \mathbf{0},
\]
and thus \(\mathfrak{I} \subseteq \mathbb{Z}(\mathcal{M})\), as required.
\end{proof}

We now introduce the notions of \(\Sigma\)-multiplicativity and maximal length within the setting of graded BiHom-algebras. These concepts are defined in a manner analogous to their counterparts in the theory of graded associative algebras, graded Lie algebras, split Lie colour algebras, and split Leibniz algebras (see~\cite{C1,C2,CS,CS2} for definitions and illustrative examples).

\begin{definition}
Let \((\mathcal{M}, \star, \psi, \phi)\) be a BiHom-algebra graded by the regular BiHom-group \(\left(G, +_{(\alpha,\beta)}, \alpha, \beta, \mathbf{0} \right)\). We say that \(\mathcal{M}\) is \textbf{\(\Sigma\)-multiplicative} if for all \(g, g' \in \Sigma\) such that \(g +_{(\alpha, \beta)} g' \in \Sigma \cup \{\mathbf{0}\}\), the following equality holds:
\[
\mathcal{M}_g \star \mathcal{M}_{g'} = \mathcal{M}_{g +_{(\alpha, \beta)} g'}.
\]
\end{definition}

We say that a graded BiHom-algebra \((\mathcal{M}, \star, \psi, \phi)\) is of \textbf{maximal length} if \(\operatorname{dim} \mathcal{M}_g = 1\) for every \(g \in \Sigma\).

\begin{remark}
    It is worth noting that the notions of \(\Sigma\)-multiplicativity and maximal length arise naturally in the study of gradings on algebras. In particular, certain structural and computational problems can often be formulated more naturally — and with greater simplicity — when one works with gradings that are both \(\Sigma\)-multiplicative and of maximal length.
\end{remark}

To illustrate this point, let us consider the specific case of the matrix algebra \(\mathcal{M}_n(\mathbb{K})\), where gradings induced by the Pauli matrices have played a fundamental role in the context of mathematical physics. These gradings are both \(\Sigma\)-multiplicative and of maximal length, and this structural property is essential in their applicability.

The key reason is that such gradings provide a decomposition of \(\mathcal{M}_n(\mathbb{K})\) into a direct sum of \(n^2\) one-dimensional subspaces—effectively yielding a basis for \(\mathcal{M}_n(\mathbb{K})\)—whose generators are all semisimple elements of the algebra.

Let us now describe these gradings in detail.
\begin{example}
  Consider the matrix algebra \(\mathcal{M} = \mathcal{M}_2(\mathbb{C})\). A \(\mathbb{Z}_2 \times \mathbb{Z}_2\)-grading on \(\mathcal{M}\) can be defined using the Pauli matrices:
\[
\sigma_1 = \begin{pmatrix}
0 & 1 \\
1 & 0
\end{pmatrix}, \quad
\sigma_2 = \begin{pmatrix}
0 & i \\
-i & 0
\end{pmatrix}, \quad
\sigma_3 = \begin{pmatrix}
1 & 0 \\
0 & -1
\end{pmatrix}.
\]

This grading is given explicitly by:
\[
\begin{array}{ll}
\mathcal{M}_{(0,0)} = \left\{ \begin{pmatrix}
a & 0 \\
0 & a
\end{pmatrix} \;\middle|\; a \in \mathbb{C} \right\}, &
\mathcal{M}_{(1,0)} = \left\{ \begin{pmatrix}
b & 0 \\
0 & -b
\end{pmatrix} \;\middle|\; b \in \mathbb{C} \right\}, \\[1.5em]
\mathcal{M}_{(0,1)} = \left\{ \begin{pmatrix}
0 & c \\
c & 0
\end{pmatrix} \;\middle|\; c \in \mathbb{C} \right\}, &
\mathcal{M}_{(1,1)} = \left\{ \begin{pmatrix}
0 & d \\
-d & 0
\end{pmatrix} \;\middle|\; d \in \mathbb{C} \right\}.
\end{array}
\]
Each homogeneous component is one-dimensional and spanned by a Pauli matrix (or scalar multiple thereof), making this grading both \(\Sigma\)-multiplicative and of maximal length.

If $\mathbb{K}$ contains a primitive $n$-th root of unit, then the Pauli matrices $-\sigma_{3}$ and $\sigma_{1}$ can be generalized to

$$
X_{a}=\left(\begin{array}{cccccc}
\epsilon^{n-1} & 0 & 0 & \cdots & 0 & 0 \\
0 & \epsilon^{n-2} & 0 & \cdots & 0 & 0 \\
\cdots & & & & & \\
0 & 0 & 0 & \cdots & \epsilon & 0 \\
0 & 0 & 0 & \cdots & 0 & 1
\end{array}\right) \text { and } X_{b}=\left(\begin{array}{cccccc}
0 & 1 & 0 & \cdots & 0 & 0 \\
0 & 0 & 1 & \cdots & 0 & 0 \\
\cdots & & & & & \\
0 & 0 & 0 & \cdots & 0 & 1 \\
1 & 0 & 0 & \cdots & 0 & 0
\end{array}\right)
$$

Let \(\mathcal{M} = \mathcal{M}_n(\mathbb{K})\), and consider the elements \(X_a, X_b \in \mathcal{M}\) satisfying the relations:
\[
X_a X_b = \epsilon X_b X_a, \quad X_a^n = X_b^n = I_n,
\]
for some fixed primitive \(n\)th root of unity \(\epsilon \in \mathbb{K}\). These generate a \(\mathbb{Z}_n \times \mathbb{Z}_n\)-grading on \(\mathcal{M}\) given by:
\[
\mathcal{M}_{(k, \ell)} = \mathbb{K} \cdot X_a^k X_b^\ell, \quad \text{for all } (k, \ell) \in \mathbb{Z}_n \times \mathbb{Z}_n.
\]
It is straightforward to verify that this grading is both \(\Sigma\)-multiplicative and of maximal length. In classical literature, the term \emph{fine} is used to refer to any grading on \(\mathcal{M}_n(\mathbb{K})\) for which \(\dim \mathcal{M}_g \leq 1\) for all \(g \in G\). The importance of such gradings is clearly justified therein, and in particular, every grading of maximal length is fine in this sense. Moreover, it was known that (for fields of characteristic zero, and later extended to arbitrary characteristic) that any grading on \(\mathcal{M}_n(\mathbb{K})\) can be obtained by combining elementary gradings with gradings that are both \(\Sigma\)-multiplicative and of maximal length.
\end{example}
 
\begin{remark}\label{remark_same}
We observe that the definitions, arguments, and results established in Sections~2 and~3 for graded BiHom-algebras of the form \((\mathcal{M}_{I}(\mathcal{R}), \star, \psi, \phi)\) remain valid in the more general context of graded BiHom-algebras \((\mathcal{M}, \star, \psi, \phi)\) considered in this section.
\end{remark}

\begin{theorem}\label{graded_simple_judgement}
Let \((\mathcal{M}, \star, \psi, \phi)\) be a graded BiHom-algebra that is both of maximal length and \(\Sigma\)-multiplicative. Then \((\mathcal{M}, \star, \psi, \phi)\) is graded simple if and only if the following conditions are satisfied:
\begin{enumerate}
    \item \(\mathbb{Z}(\mathcal{M}) = \mathbf{0}\);
    \item \(\mathcal{M}_{\mathbf{0}} = \sum_{g \in \Sigma} \left( \mathcal{M}_g \star \mathcal{M}_{-\beta^{-1} \circ \alpha(g)} \right)\);
    \item All elements of \(\Sigma\) are connected.
\end{enumerate}
\end{theorem}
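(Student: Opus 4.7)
The result is a biconditional, which I plan to handle in two parts, drawing on the machinery built up in the earlier sections together with the two new hypotheses (maximal length and $\Sigma$-multiplicativity) introduced in this section.

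\textbf{Forward direction: graded simple implies (1)--(3).} Condition~(3) is already the content of the first bullet of Theorem~\ref{B'} (whose argument carries over by Remark~\ref{remark_same}). For~(2), I fix any $g\in\Sigma$ and observe that the graded ideal $\mathfrak{I}_{[g]}$ of Theorem~\ref{lemma_idealofgraded} is nonzero, so graded simplicity forces $\mathfrak{I}_{[g]}=\mathcal{M}$ and in particular $\mathfrak{I}_{\mathbf{0},[g]}=\mathcal{M}_{\mathbf{0}}$. The reparametrisation $h=\beta(g')$, which is a bijection on $\Sigma$ because $\beta$ preserves the support, converts the defining set $\{\mathcal{M}_{\beta(g')}\star\mathcal{M}_{-\alpha(g')}\}$ into $\{\mathcal{M}_h\star\mathcal{M}_{-\beta^{-1}\circ\alpha(h)}\}$ after using commutativity of $\alpha,\beta$, which is exactly~(2). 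For~(1), I verify that $\mathbb{Z}(\mathcal{M})$ is itself a graded ideal: it is two-sided by definition, stable under $\psi,\phi$ because they are algebra automorphisms, and graded because the bijectivity of $\alpha$ on $G$ forces the homogeneous components of any central element to centralise $\mathcal{M}$ separately. Since $\mathcal{M}\star\mathcal{M}\neq\mathbf{0}$ we have $\mathbb{Z}(\mathcal{M})\neq\mathcal{M}$, and graded simplicity yields $\mathbb{Z}(\mathcal{M})=\mathbf{0}$.

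\textbf{Reverse direction: (1)--(3) imply graded simple.} Let $\mathfrak{J}$ be a nonzero graded ideal of $\mathcal{M}$; my goal is $\mathfrak{J}=\mathcal{M}$. Maximal length collapses each $\mathfrak{J}_g$ with $g\in\Sigma$ to either $\mathbf{0}$ or $\mathcal{M}_g$, so I put $A:=\{g\in\Sigma:\mathcal{M}_g\subseteq\mathfrak{J}\}$ and split into two cases. If $A=\emptyset$, then $\mathfrak{J}\subseteq\mathcal{M}_{\mathbf{0}}$, and Lemma~\ref{lemma_center} (whose hypothesis is~(2)) places $\mathfrak{J}$ inside $\mathbb{Z}(\mathcal{M})=\mathbf{0}$ by~(1), contradicting $\mathfrak{J}\neq\mathbf{0}$. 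Otherwise fix $g_0\in A$.

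The heart of the argument is upgrading this to $A=\Sigma$. Given any $g\in\Sigma$, condition~(3) supplies a connection $\{g_1,\ldots,g_k\}$ from $g_0$ to $g$. Since $\mathfrak{J}$ is $\psi,\phi$-stable, $\mathcal{M}_{g_1}=\mathcal{M}_{\alpha^i\circ\beta^j(g_0)}\subseteq\mathfrak{J}$, and walking along the partial sums $s_\ell=\alpha^{\ell-1}(g_1)+\alpha^{\ell-2}\circ\beta(g_2)+\cdots+\beta(g_\ell)$ (each in $\Sigma$ for $\ell<k$), $\Sigma$-multiplicativity identifies $\mathcal{M}_{s_{\ell-1}}\star\mathcal{M}_{g_\ell}=\mathcal{M}_{s_\ell}$ and propagates the inclusion, yielding $\mathcal{M}_{s_k}=\mathcal{M}_{\epsilon\alpha^m\circ\beta^n(g)}\subseteq\mathfrak{J}$ for some $\epsilon\in\{\pm 1\}$ and $m,n\in\mathbb{N}$. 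The delicate final step is converting this into $\mathcal{M}_g\subseteq\mathfrak{J}$ itself; this is where I expect the main obstacle, because graded ideals in our framework are only required to be stable under $\psi,\phi$, not under $\psi^{-1},\phi^{-1}$. My plan is to exploit both slots of the ideal together with $\Sigma$-multiplicativity: the product $\mathcal{M}_{\epsilon\alpha^m\circ\beta^n(g)}\star\mathcal{M}_{-\epsilon\beta^{-1}\circ\alpha^{m+1}\circ\beta^n(g)}$ lands in degree $\mathbf{0}$ with both factors in $\Sigma$, so $\Sigma$-multiplicativity collapses it to $\mathcal{M}_{\mathbf{0}}$ and places $\mathcal{M}_{\mathbf{0}}$ in $\mathfrak{J}$; a subsequent BiHom-associative rearrangement, combined with condition~(2) and the symmetry of $\Sigma$, then realises $\mathcal{M}_g$ as a product whose factors already lie in $\mathfrak{J}$ or in $\mathcal{M}$, delivering $g\in A$.

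Once $A=\Sigma$ is established, condition~(2) immediately gives $\mathcal{M}_{\mathbf{0}}=\sum_{g\in\Sigma}\mathcal{M}_g\star\mathcal{M}_{-\beta^{-1}\circ\alpha(g)}\subseteq\mathfrak{J}$, so $\mathfrak{J}=\mathcal{M}_{\mathbf{0}}\oplus\bigoplus_{g\in\Sigma}\mathcal{M}_g=\mathcal{M}$. The remaining nontriviality $\mathcal{M}\star\mathcal{M}\neq\mathbf{0}$ required for graded simplicity follows because~(2) (together with $\Sigma\neq\emptyset$, forced by $\mathcal{M}\neq\mathbf{0}$) exhibits $\mathcal{M}_{\mathbf{0}}$ as a sum of nontrivial products under $\Sigma$-multiplicativity, so the algebra cannot be square-zero.
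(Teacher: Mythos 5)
Your forward direction and the skeleton of your reverse direction coincide with the paper's proof: Theorem~\ref{B'} via Remark~\ref{remark_same} yields (2) and (3), and noting that $\mathbb{Z}(\mathcal{M})$ is a graded ideal yields (1); for the converse you correctly take a nonzero graded ideal $\mathfrak{J}$, use maximal length to reduce each $\mathfrak{J}_g$ ($g\in\Sigma$) to $\mathbf{0}$ or $\mathcal{M}_g$, use Lemma~\ref{lemma_center} with (1) to find $g_0$ with $\mathcal{M}_{g_0}\subseteq\mathfrak{J}$, and walk a connection to any $g$ using $\Sigma$-multiplicativity to land $\mathcal{M}_{\epsilon\alpha^m\circ\beta^n(g)}$ inside $\mathfrak{J}$. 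Your direct product $\mathcal{M}_{\epsilon\alpha^m\circ\beta^n(g)}\star\mathcal{M}_{-\epsilon\beta^{-1}\circ\alpha^{m+1}\circ\beta^n(g)}=\mathcal{M}_{\mathbf{0}}$ to get $\mathcal{M}_{\mathbf{0}}\subseteq\mathfrak{J}$ is a small but valid variant of the paper's method, which instead applies the dichotomy ``$\mathcal{M}_{\alpha^m\beta^n(g)}\subseteq\mathfrak{J}$ for all $m,n$, or $\mathcal{M}_{-\alpha^m\beta^n(g)}\subseteq\mathfrak{J}$ for all $m,n$'' termwise to the sum in condition~(2).

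The genuine gap is the final conversion from $\mathcal{M}_{\mathbf{0}}\subseteq\mathfrak{J}$ to $\mathcal{M}_g\subseteq\mathfrak{J}$, which you flag but leave as a plan rather than an argument. The paper does it in a single line: for $g\in\Sigma$, $\mathcal{M}_g=\mathcal{M}_{\mathbf{0}}\star\mathcal{M}_{\beta^{-1}(g)}\subseteq\mathfrak{J}\star\mathcal{M}\subseteq\mathfrak{J}$, so $\Sigma_{\mathfrak{J}}=\Sigma$ and hence $\mathfrak{J}=\mathcal{M}$. You would need to supply this identity (or a substitute) and in particular justify that the product is onto, not merely contained in, the one-dimensional $\mathcal{M}_g$. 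Note that this is exactly where the subtlety you raise is operative: $\Sigma$-multiplicativity as stated applies only to factors whose degrees lie in $\Sigma$, so $\mathcal{M}_{\mathbf{0}}\star\mathcal{M}_{\beta^{-1}(g)}$ is not literally covered by it, and the paper's own intermediate assertion $\psi(\mathfrak{J})=\mathfrak{J}$ (used to obtain $\mathbb{Z}$-powers rather than only $\mathbb{N}$-powers in the stability of $\Sigma_{\mathfrak{J}}$) is precisely the ``stability under $\psi^{-1},\phi^{-1}$'' concern you identify. You correctly sense where the delicacy lies, but identifying a difficulty is not the same as resolving it; ``a subsequent BiHom-associative rearrangement'' must actually be exhibited for the proof to stand.
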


\begin{proof}
    Suppose the BiHom-algebra $(\mathcal{M}, \star,\psi,\phi)$ is graded simple. It is easy to verify that $\mathbb{Z}(\mathcal{M})$ is a graded ideal of $\mathcal{M}$ and so $\mathbb{Z}(\mathcal{M})=0$. By Theorem~\ref{B'} and Remark~\ref{remark_same}, we complete the proof of the first implication.
    
    To prove the converse, we consider $\mathfrak{I}=\bigoplus_{g \in G} \mathfrak{I}_{g}$ where $\mathfrak{I}_{g}=\mathfrak{I} \cap \mathcal{M}_{g}$, a nonzero graded ideal of the BiHom-algebra $(\mathcal{M}, \star,\psi,\phi)$. By the maximal length of the BiHom-algebra $(\mathcal{M}, \star,\psi,\phi)$, if we denote by $\Sigma_{\mathfrak{I}}:=\left\{g \in \Sigma~|~\mathfrak{I}_{g} \neq 0\right\}$, we can write $\mathfrak{I}=\mathfrak{I}_{0} \oplus\left(\bigoplus_{g \in \Sigma_{\mathfrak{J}}} \mathcal{M}_{g}\right)$, being also $\Sigma_{\mathfrak{I}} \neq \emptyset$ as consequence of Lemma~\ref{lemma_center}. Hence, we can take $g_{0} \in \Sigma_{\mathfrak{I}}$ being so $0 \neq \mathcal{M}_{g_{0}} \subset \mathfrak{I}$. Since the fact $\psi(\mathfrak{I})=\mathfrak{I},\phi(\mathfrak{I})=\mathfrak{I}$ and two equations~\eqref{4.2} and ~\eqref{4.3} give us that
\begin{align}\label{4.6}
    \text { if } g \in \Sigma_{\mathfrak{I}} \text { then }\left\{\alpha^{i}\circ\beta^j(g)~|~i,j \in \mathbb{Z}\right\} \subset \Sigma_{\mathfrak{I}}.
\end{align}
For $-g\in\Sigma$, the $\Sigma$-multiplicativity gives us that
\begin{align*}
    \mathcal{M}_{-g}=\mathcal{M}_{g}\star\mathcal{M}_{-\beta^{-1}\circ\alpha(g)-\beta^{-1}(g)}\subset\mathfrak{I}\star\mathcal{M}_{-\beta^{-1}\circ\alpha(g)-\beta^{-1}(g)}\subset\mathfrak{I}.
\end{align*}
From the equation~\eqref{4.6}, we can obtain that $\left\{-\alpha^{i}\circ\beta^j(g)~|~i,j \in \mathbb{Z}\right\} \subset \Sigma_{\mathfrak{I}}$, and we get
\begin{align}\label{4.7}
    \left\{\mathcal{M}_{\pm \alpha^{i}\circ\beta^j\left(g_{0}\right)}~|~i,j\in \mathbb{Z}\right\} \subset \mathfrak{I}.
\end{align}

Now consider $g^{\prime} \in \Sigma$ such that $g^{\prime} \notin\left\{ \pm \alpha^{i}\circ\beta^j\left(g_{0}\right)~|~i,j\in \mathbb{Z}\right\}$. The fact that $g_{0}$ and $g^{\prime}$ are connected gives us a connection $\left\{g_{1}, g_{2}, \ldots, g_{k}\right\}, k \geq 2$, from $g_{0}$ to $g^{\prime}$ such that
\begin{align*}
& g_{1}=\alpha^{i}\circ\beta^j\left(g_{0}\right) \text { for some } i,j \in \mathbb{N} . \\
& \alpha\left(g_{1}\right)+\beta\left(g_{2}\right) \in \Sigma \\
& \cdots \cdots \cdots \\
& \alpha^{i}\left(g_{1}\right)+\alpha^{i-1}\circ\beta\left(g_{2}\right)+\alpha^{i-2}\circ\beta\left(g_{3}\right)+\cdots+\beta\left(g_{i+1}\right) \in \Sigma \\
& \cdots \cdots \cdots \\
& \alpha^{k-2}\left(g_{1}\right)+\alpha^{k-3}\circ\beta\left(g_{2}\right)+\alpha^{k-4}\circ\beta\left(g_{3}\right)+\cdots+\alpha^{k-i-1}\circ\beta\left(g_{i}\right)+\cdots+\beta\left(g_{k-1}\right) \in \Sigma \\
& \alpha^{k-1}\left(g_{1}\right)+\alpha^{k-2}\circ\beta\left(g_{2}\right)+\alpha^{k-3}\circ\beta\left(g_{3}\right)+\cdots+\alpha^{k-i}\circ\beta\left(g_{i}\right)+\cdots+\beta\left(g_{k}\right)=\epsilon \alpha^{m}\circ\beta^n\left(g^{\prime}\right)
\end{align*}
$\text { for some } m,n \in \mathbb{N} \text { and } \epsilon \in\{ \pm\}$. Denote $g_{1}, g_{2} \in \Sigma$ and the $\Sigma$-multiplicativity gives us that $g_{1}+{ }_{(\alpha,\beta)} g_{2}=\alpha\left(g_{1}\right)+\beta\left(g_{2}\right) \in \Sigma$. Then $\mathcal{M}_{g_{1}} \star \mathcal{M}_{g_{2}}=\mathcal{M}_{\alpha\left(g_{1}\right)+\beta\left(g_{2}\right)}\neq 0$. Now taking into account that $\mathcal{M}_{g_{1}} \subset \mathfrak{I}$ as consequence of equation~\eqref{4.7}, we get
$$
0 \neq \mathcal{M}_{\alpha\left(g_{1}\right)+\beta\left(g_{2}\right)}=\mathcal{M}_{g_{1}} \star \mathcal{M}_{g_{2}}\subset\mathfrak{I}\star\mathcal{M}_{g_{2}} \subset \mathfrak{I}.
$$
We can argue in a similar way from $\alpha\left(g_{1}\right)+\beta\left(g_{2}\right), g_{3}$ and $\left(\alpha\left(g_{1}\right)+\beta\left(g_{2}\right)\right)+{ }_{(\alpha,\beta)} g_{3}=\alpha^{2}\left(g_{1}\right)+\alpha\circ\beta\left(g_{2}\right)+\beta\left(g_{3}\right)$ to get
$$
0 \neq \mathcal{M}_{\sigma^{2}\left(g_{1}\right)+\sigma^{2}\left(g_{2}\right)+\sigma\left(g_{3}\right)} \subset \mathfrak{I}
$$
Following this process with the connection $\left\{g_{1}, g_{2}, \ldots, g_{k}\right\}$ we obtain that
$$
0 \neq \mathcal{M}_{\alpha^{k-1}\left(g_{1}\right)+\alpha^{k-2}\circ\beta\left(g_{2}\right)+\alpha^{k-3}\circ\beta\left(g_{3}\right)+\cdots+\alpha^{k-i}\circ\beta\left(g_{i}\right)+\cdots+\beta\left(g_{k}\right)} \subset \mathfrak{I}
$$
and so either $\mathcal{M}_{\alpha^{m}\circ\beta^n\left(g^{\prime}\right)} \subset \mathfrak{I}$ or $\mathcal{M}_{-\alpha^{m}\circ\beta^n\left(g^{\prime}\right)} \subset \mathfrak{I}$. Now taking into account two equations~\eqref{4.6} and~\eqref{4.7}, we get
\begin{align}\label{4.8}
\text {either }\left\{\mathcal{M}_{\alpha^{m}\circ\beta^n(g)}~|~m,n \in \mathbb{Z}\right\} \subset \mathfrak{I} \text { or }\left\{\mathcal{M}_{-\alpha^{m}\circ\beta^n(g)}~|~m,n \in \mathbb{Z}\right\} \subset \mathfrak{I} \text { for each } g \in \Sigma.
\end{align}
Observe that equation~\eqref{4.8} can be reformulated by stating that given any $g \in \Sigma$ either $\left\{\alpha^{m}\circ\beta^n(g)~|~m,n\in \mathbb{Z}\right\}$ or $\left\{\alpha^{m}\circ\beta^n(-g)~|~m,n \in \mathbb{Z}\right\}$ is contained in $\Sigma_{\mathfrak{I}}$. Since $\mathcal{M}_{\mathbf{0}}=\sum_{g \in \Sigma}\left(\mathcal{M}_{g} \star \mathcal{M}_{-\beta^{-1}\circ\alpha(g)}\right)$ we have that 
$$
\mathcal{M}_{\mathbf{0}}\subset\sum_{g \in \Sigma}\left(\mathfrak{I} \star \mathcal{M}_{-\beta^{-1}\circ\alpha(g)}\right)\subset\mathfrak{I},
$$
or
$$
\mathcal{M}_{\mathbf{0}}\subset\sum_{g \in \Sigma}\left(\mathcal{M}_{g} \star \mathfrak{I}\right)\subset\mathfrak{I}.
$$
Thus, we get
\begin{align}\label{4.9}
    \mathcal{M}_{\mathbf{0}} \subset \mathfrak{I}.
\end{align}

Now we will prove that $\Sigma_{\mathfrak{I}}=\Sigma$. Using the equation~\eqref{4.9}, for any $g\in\Sigma$, we have 
\begin{align*}
    \mathcal{M}_g=\mathcal{M}_{\mathbf{0}}\star \mathcal{M}_{\beta^{-1}(g)}\subset \mathfrak{I}\star \mathcal{M}_{\beta^{-1}(g)}\subset \mathfrak{I}.
\end{align*}
Thus $-g \in \Sigma_{\mathfrak{I}}$ and $\Sigma_{\mathfrak{I}}=\Sigma$.
Therefore, by the equation~\eqref{4.9}, the nonzero graded ideal $\mathfrak{I}$ can be represented as
$$
\mathfrak{I}=\mathfrak{I}_{0} \oplus\left(\bigoplus_{g \in \Sigma_{\mathfrak{J}}} \mathcal{M}_{g}\right)=\mathcal{M}_{\mathbf{0}}\oplus\left(\bigoplus_{g \in \Sigma} \mathcal{M}_{g}\right)=\mathcal{M}.
$$
Hence $\mathcal{M}$ is a graded simple.
\end{proof}

\begin{theorem}
Let \((\mathcal{M}, \star, \psi, \phi)\) be a graded BiHom-algebra that is of maximal length and \(\Sigma\)-multiplicative, with trivial centre \(\mathbb{Z}(\mathcal{M}) = \mathbf{0}\), and satisfying \[
\mathcal{M}_{\mathbf{0}} = \sum_{g \in \Sigma} \left( \mathcal{M}_g \star \mathcal{M}_{-\beta^{-1} \circ \alpha(g)} \right).
\]
Then the algebra decomposes as a direct sum of graded ideals:
\[
\mathcal{M} = \bigoplus_{[g] \in \Sigma / \sim} \mathfrak{I}_
{[g]},
\]
where each graded ideal \(\mathfrak{I}_{[g]}\) is graded simple, and its support \(\Sigma_{\mathfrak{I}_{[g]}}\) consists entirely of elements that are pairwise \(\Sigma_{\mathfrak{I}_{[g]}}\)-connected.
\end{theorem}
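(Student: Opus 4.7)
The plan is to combine Corollary~\ref{corollary_theorem}, which by Remark~\ref{remark_same} transfers verbatim to the present setting, with the characterisation of graded simplicity supplied by Theorem~\ref{graded_simple_judgement}. The argument therefore splits into two halves: first I would produce the direct sum decomposition, then verify that each summand $\mathfrak{I}_{[g]}$ fulfils the hypotheses of Theorem~\ref{graded_simple_judgement}.

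For the decomposition, the two hypotheses of Corollary~\ref{corollary_theorem}, namely $\mathbb{Z}(\mathcal{M}) = \mathbf{0}$ and the zero-component formula, are exactly those assumed here, so one immediately obtains
$$\mathcal{M} = \bigoplus_{[g] \in \Sigma/\sim} \mathfrak{I}_{[g]}.$$
For graded simplicity of a fixed $\mathfrak{I}_{[g]}$, I would regard it as a graded BiHom-algebra in its own right with support $\Sigma_{\mathfrak{I}_{[g]}} = [g]$ and verify the three conditions of Theorem~\ref{graded_simple_judgement}. Maximal length and $\Sigma$-multiplicativity are inherited from $\mathcal{M}$, since $(\mathfrak{I}_{[g]})_{g'}$ equals $\mathcal{M}_{g'}$ for each $g' \in [g]$ and the products coincide. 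The zero-component formula for $\mathfrak{I}_{[g]}$ is built into the very definition of $\mathfrak{I}_{\mathbf{0},[g]}$. Triviality of $\mathbb{Z}(\mathfrak{I}_{[g]})$ is the key consequence of Lemma~\ref{B}: if $v \in \mathfrak{I}_{[g]}$ annihilates $\mathfrak{I}_{[g]}$, then Lemma~\ref{B} forces it to annihilate every $\mathfrak{I}_{[g']}$ with $[g'] \neq [g]$, so since $\mathcal{M}$ is the direct sum of all these ideals we conclude $v \in \mathbb{Z}(\mathcal{M}) = \mathbf{0}$.

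The delicate point — and the main obstacle — is the third condition of Theorem~\ref{graded_simple_judgement}: every pair of elements of $[g]$ must admit a connection whose intermediate terms again lie in $[g]$. Two such elements are $\sim$-connected in $\Sigma$ by construction, but the intermediate terms $g_2, \ldots, g_k$ of a witnessing connection may a priori escape $[g]$. To bridge this gap I would exploit the observation that the partial sums $s_i := \alpha^{i-1}(g_1) + \alpha^{i-2}\circ\beta(g_2) + \cdots + \beta(g_i)$ all belong to $[g]$; indeed each truncation $\{g_1, \ldots, g_i\}$ is itself a connection from the starting point to $s_i$ in the sense of Definition~\ref{connection}, so by transitivity of $\sim$ one has $s_i \sim g$. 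Exploiting the closure of $[g]$ under the twisted operation $(h_1, h_2) \mapsto \alpha(h_1) + \beta(h_2)$ whenever the result lies in $\Sigma$, together with Lemmas~\ref{lemma_g}--\ref{lemma_2.7} to realign the $\alpha^r \circ \beta^s$ bookkeeping, one can transform the witness of $g' \sim g''$ in $\Sigma$ into one that stays inside $\Sigma_{\mathfrak{I}_{[g]}}$. Carrying out this substitution while preserving all three items of Definition~\ref{connection} is the technical step requiring care; once it is complete, Theorem~\ref{graded_simple_judgement} applies to each $\mathfrak{I}_{[g]}$ and the decomposition is into graded simple ideals as required.
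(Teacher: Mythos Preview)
Your approach is correct and matches the paper's exactly: invoke Corollary~\ref{corollary_theorem} (via Remark~\ref{remark_same}) for the direct sum, then feed each $\mathfrak{I}_{[g]}$ into Theorem~\ref{graded_simple_judgement} after checking inherited maximal length and $\Sigma$-multiplicativity, the zero-component formula (built into $\mathfrak{I}_{\mathbf{0},[g]}$), and triviality of the centre via Lemma~\ref{B}.

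The ``delicate point'' you flag, however, is not an obstacle and needs no transformation of the connection. The intermediate terms $g_2,\ldots,g_k$ already lie in $[g]$. With your notation $s_i$ for the partial sums, one has $s_i=\alpha(s_{i-1})+\beta(g_i)\in\Sigma$ with $s_{i-1},g_i\in\Sigma$, and then the three-term sequence $\{s_{i-1},\,g_i,\,-\beta^{-1}\!\circ\alpha^{2}(s_{i-1})\}$ (precisely the device used in the proof of Lemma~\ref{B}) is a connection from $s_{i-1}$ to $g_i$. Combined with your own observation that each $s_{i-1}\in[g]$, transitivity gives $g_i\in[g]$ by induction on $i$. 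Hence any $\Sigma$-connection between two elements of $[g]$ is already a $\Sigma_{\mathfrak{I}_{[g]}}$-connection, and the paper's terse ``by construction'' is justified without the realignment argument you sketch.
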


\begin{proof}
By Corollary~\ref{corollary_theorem} and Remark~\ref{remark_same}, we have the direct sum decomposition
\[
\mathcal{M} = \bigoplus_{[g] \in \Sigma / \sim} \mathfrak{I}_{[g]},
\]
where each ideal \(\mathfrak{I}_{[g]}\) is given by
\[
\mathfrak{I}_{[g]} = \mathfrak{I}_{\mathbf{0},[g]} \oplus V_{[g]} = 
\operatorname{Span}_{\mathbb{K}} 
\left\{ \mathcal{M}_{g'} \star \mathcal{M}_{-g'} \mid g' \in [g] \right\} 
\oplus 
\bigoplus_{g' \in [g]} \mathcal{M}_{g'}.
\]

The support of \(\mathfrak{I}_{[g]}\) is precisely \(\Sigma_{\mathfrak{I}_{[g]}} = [g]\), and by construction, all elements of \(\Sigma_{\mathfrak{I}_{[g]}}\) are pairwise \(\Sigma_{\mathfrak{I}_{[g]}}\)-connected.

Moreover, since \(\mathcal{M}\) is \(\Sigma\)-multiplicative and of maximal length, the same properties are inherited by each \(\mathfrak{I}_{[g]}\). From Lemma~\ref{B} and the assumption that \(\mathbb{Z}(\mathcal{M}) = \mathbf{0}\), it follows that the centre of each \(\mathfrak{I}_{[g]}\) in itself is trivial, i.e.,
\[
\mathbb{Z}(\mathfrak{I}_{[g]}) = \mathbf{0}.
\]

We may therefore apply Theorem~\ref{graded_simple_judgement} to each graded ideal \(\mathfrak{I}_{[g]}\), concluding that every \(\mathfrak{I}_{[g]}\) is graded simple. 
\end{proof}

\section*{Acknowledgements}

S.H. Wang was partially supported by the National Natural Science Foundation of China (Grant No. 12271089). H. Zhu gratefully acknowledges the SPMS at NTU-SG for its warm academic environment and his PI for continuous support.

\bibliographystyle{plain}

\begin{thebibliography}{99}

\bibitem{AHM} 
K. Abdaoui, A. Hassine, A. Makhlouf, {\em BiHom-Lie colour algebras structures}, arxiv preprint arxiv:1706.02188, 2017.

\bibitem{BZ} 
Y. Bathurin, M. Zaicev, {\em Involutions on graded matrixalgebras}, J. Algebra, {\bf 315}: 527-540, 2007.

\bibitem{C1} A.J. Calderón, {\em On the structure of graded Lie algebras}, J. Math. Phys. {\bf 50}: 103513, 2009.

\bibitem{C2} A.J. Calderón, {\em Graded associative algebras}, Rep. Math. Phys. {\bf 69}: 75-86, 2012.

\bibitem{CDM} A.J. Calderón, C. Draper, C. Martin, {\em Gradings on thereal forms of the Albert algebra, of $\mathfrak g_2$, and of $\mathfrak f_4$}, J. Math. Phys. {\bf 51}: 053516, 2010.  

\bibitem{CS} A.J. Calderón, J.M. Sánchez, {\em On the structure of split Lie color algebras}, Linear Algebra Appl. {\bf 436}: 307–315, 2012.

\bibitem{CS2} A.J. Calderón, J.M. Sánchez, {\em On split Leibniz algebras. Linear Algebra Appl.} {\bf 436}: 16481660, 2012.


\bibitem{CS3} M. Chaves, D. Singleton, {\em Phantom energy from graded algebras}, Modern Phys. Lett. A {\bf 22}: 29-40, 2007.

\bibitem{CFS} R. Coquereaunx, G. Esposito-Farese, F. Scheck, {\em Noncommutative geometry and graded algebras in electroweak interactions}, Internat. J. Modern Phys. A {\bf 7}: 6555-6593, 1992.

\bibitem{D} G. Dahl, {\em The doubly graded matrix cone and Ferrers matrices}, Linear Algebra Appl. {\bf 368}: 171190, 2003.


\bibitem{EDN} A. Eli, H. Darrell, M. Natapov, {\em Graded identities of matrix algebras and the universal graded algebra}, Trans. Amer. Math. Soc. {\bf 362}: 3125-3147, 2010.


\bibitem{FV} D.S. Freed, C. Vafa, {\em Global anomalies on orbifolds}, Commun. Math. Phys. {\bf 110}: 107-119, 1987.



\bibitem{GG1} G. Greaves, {\em Cyclotomic matrices over the Eisenstein and Gaussian integers}, J. Algebra, 2012, {\bf 372}: 560-583.

\bibitem{GG2} G. Greaves, {\em Small-span Hermitian matrices over quadratic integer rings}, Math. Comput., 2015, {\bf 84}(291): 409-424.

\bibitem{GMM} G. Graziani, A. Makhlouf, C. Menini, et.al, {\em BiHom-associative algebras, BiHom-Lie algebras and BiHom-bialgebras}, SIGMA. Symmetry, Integrability and Geometry: Methods and Applications, 2015, {\bf 11}: 086.

\bibitem{GRR} G.V. Gehlen, V. Rittenberg, H. Ruegg, {\em Conformal invariance and finite-dimensional quantum chains}, J. Phys. A. {\bf 29}: 107-119, 1986.

\bibitem{G} D. Geir, {\em The doubly graded matrix cone and Ferrers matrices}, Linear Algebra Appl. {\bf 368}: 171190, 2003.


\bibitem{HLS} J. Hartwig, S. Larsson, S. Silvestrov, {\em Deformations of Lie algebras using $\sigma$-derivations}, J. Algebra {\bf 295}: 314-361, 2006.

\bibitem{HMN} A. Hassine, S. Mabrouk, O. Ncib, {\em 3-BiHom-Lie superalgebras induced by BiHom-Lie superalgebras}, Linear and Multilinear Algebra, 2022, {\bf 70}(1): 101-121.




\bibitem{MA} J. María, J. Antonio, {\em On graded matrix Hom-algebras}, The Electronic Journal of Linear Algebra. {\bf 24} (2012): 45-65.

\bibitem{K} M. Kovhetov, {\em Gradings on finite-dimensional simple Lie algebras}, Acta Appl. Math. {\bf 108}: 101-127, 2009.





\bibitem{PPS} J. Patera, E. Pelantova, M. Svobodova, {\em Fine gradings of $o(4,\mathbb C)$}, J. Math. Phys. {\bf 45}: 21882198, 2004.


\bibitem{TG} G. Taylor, G. Greaves, {\em Lehmer’s conjecture for Hermitian matrices over the Eisenstein and Gaussian integers}, Electron. J. Combin. J. Comb. {\bf 20}(1), 2013.

\bibitem{V} A. Verbovetsky, {\em Lagrangian formalism over graded algebras}, J. Geom. Phys. {\bf 18}: 195-214, 1996.


\bibitem{SWZZ} 
J. Sun, S. Wang, C. Zhang, et. al, {\em Sweedler duality for Hom-(co)algebras and Hom-(co)modules}, preprint.

\bibitem{SWZ1} 
J. Sun, S. Wang, H. Zhu, {\em Rational closure and Sweedler duality for Hom-bialgebras}, in preparation.

\bibitem{SWZ2} 
J. Sun, S. Wang, H. Zhu, {\em Hom-categorisation of the generalised Sweedler duality}, in preparation.


\bibitem{SWZ3} 
J. Sun, S. Wang, H. Zhu, {\em A deformed Toda lattice hirarchy and unified solutions}, submitted.

\bibitem{Z} R. Liu, C. Zhang, H. Zhu, {\em BiHom groups. I. Foundations}, in preparation.


\end{thebibliography}

\end{document}